\documentclass[11pt,letterpaper]{amsart}
\usepackage{graphicx}
\usepackage{amsmath} 
\usepackage{amsthm,amsfonts,amssymb,mathrsfs,amscd,amstext,amsbsy} 
\usepackage{epic,eepic} 
\usepackage{yfonts}
\usepackage{paralist,enumerate}
\usepackage[all]{xy}
\usepackage{hyperref}
\hypersetup{colorlinks}

\newtheorem{theorem}{Theorem}[section]
\newtheorem{cor}[theorem]{Corollary}
\newtheorem{lemma}[theorem]{Lemma}

\newtheorem{theo}[theorem]{Theorem}
\newtheorem{lem}[theorem]{Lemma}
\newtheorem{pro}[theorem]{Proposition}

\newtheorem{exa}[theorem]{Example}

\newtheorem{Definition}[theorem]{Definition}
\newtheorem*{Definition*}{Definition}
\def\qed{\hfill \ifhmode\unskip\nobreak\fi\quad\ifmmode\Box\else$\Box$\fi\\ }
\begin{document}
\title[Circle action on oriented 6-manifold with isolated fixed points]{Circle actions on six dimensional oriented manifolds with isolated fixed points}
\author{Donghoon Jang}
\thanks{MSC 2020: primary 58C30, secondary 57M60}
\thanks{Donghoon Jang was supported by the National Research Foundation of Korea(NRF) grant funded by the Korea government(MSIT) (2021R1C1C1004158).}
\address{Department of Mathematics, Pusan National University, Pusan, South Korea}
\email{donghoonjang@pusan.ac.kr}
\begin{abstract}
To classify a group action on a manifold, the data associated with the fixed point set is essential. In this paper, we classify the fixed point data of a circle action on a 6-dimensional compact connected oriented manifold with isolated fixed points, where the fixed point data consists of the collection of signs and weights at the fixed points. We show that this fixed point data can be reduced to the empty collection by performing a sequence of operations. Specifically, we prove that one can successively take equivariant connected sums at fixed points with $S^6$, $\mathbb{CP}^3$, or 6-dimensional analogues of the Hirzebruch surfaces (and their oppositely oriented counterparts), resulting in a fixed-point-free action on a compact connected oriented 6-manifold.
\end{abstract}

\maketitle

\section{Introduction}

Torus actions on compact manifolds have been studied in low dimensions. In dimensions 1 and 2 the classification results are simple. Torus actions and circle actions on 3- and 4-manifolds were studied and classified in 1960's and 1970's. Raymond classified circle actions on compact 3-manifolds \cite{R}.
Orlik and Raymond proved that a 2-torus action on a simply connected closed orientable 4-manifold is $S^4$, or a connected sum of $\mathbb{CP}^2$, $\overline{\mathbb{CP}^2}$, and the Hirzebruch surfaces \cite{OR1}. Fintushel proved an analogous result that a circle action on a simply connected closed oriented 4-manifold is a connected sum of $S^4$, $\mathbb{CP}^2$, $\overline{\mathbb{CP}^2}$, and $S^2 \times S^2$ \cite{F2}. 
In the non-simply connected case, Orlik and Raymond showed that a 2-torus action on a closed orientable 4-manifold is determined by its orbit data \cite{OR2}. 
Fintushel similarly showed that a circle action on a closed oriented 4-manifold is determined by its orbit data \cite{F3}.
See also \cite{F1,J1,P} for classification results on certain 4-dimensional oriented $S^1$-manifolds.

Circle actions on different types of 4-manifolds with fixed points have also been studied. Carrell, Howard, and Kosniowski studied complex surfaces \cite{CHK}. Ahara and Hattori \cite{AH}, Audin \cite{Au}, and Karshon \cite{Ka} studied symplectic 4-manifolds. 
For both complex 4-manifolds and symplectic 4-manifolds, a common phenomenon arises: one can successively blow down any such manifold to a minimal one, which is $\mathbb{CP}^2$, a Hirzebruch surface, or a ruled surface. See also an extension of these results to almost complex 4-manifolds with finitely many fixed points \cite{J3}.

We consider a torus action on a manifold with a non-empty, finite fixed point set. Because the dimension of a manifold and the dimension of its fixed point set have the same parity, the next case to consider is dimension 6. 
For any type of manifold, classifying torus actions on 6-manifolds is more difficult, and known results require additional restrictions on the actions and/or the manifolds. For instance, McGavran and Oh considered $T^3$-actions on closed orientable 6-manifolds whose orbit space is $D^3$ \cite{MO}, Kuroki considered $T^3$-actions on closed oriented 6-manifolds with fixed points and vanishing odd cohomology groups \cite{Ku}, Tolman considered Hamiltonian $S^1$-actions on symplectic manifolds with minimal cohomology groups \cite{T} (see also \cite{L}), and Ahara \cite{Ah} and the author \cite{J2} considered $S^1$-actions on almost complex 6-manifolds with a small number of fixed points.

In this paper, we study circle actions on 6-dimensional compact oriented manifolds with isolated fixed points, without imposing any additional assumptions. A geometric formulation of our result is as follows.

\begin{theorem}\label{t11}
Let the circle group $S^1$ act on a 6-dimensional compact connected oriented manifold $M$ with a discrete fixed point set. 
Then, by successively taking equivariant connected sums at fixed points of $M$ and at fixed points of circle actions on $S^6$, $\mathbb{CP}^3$, $Z_1$, and $Z_2$ (and these with opposite orientations), one can construct another 6-dimensional compact connected oriented manifold equipped with a fixed-point-free circle action.
This process terminates after finitely many steps.
The circle actions on $S^6$, $\mathbb{CP}^3$, $Z_1$, and $Z_2$ all have non-empty finite fixed point sets.
\end{theorem}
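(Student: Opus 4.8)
The plan is to translate the geometric assertion into a purely combinatorial reduction on the \emph{fixed point data} and then exhibit an algorithm that empties it. First I would fix conventions: at an isolated fixed point $p$ of an oriented $6$-manifold the isotropy representation on $T_pM$ splits into three nontrivial real $2$-planes, so it records an unordered triple of weights $\{w_1,w_2,w_3\}$, well defined up to simultaneously reversing an \emph{even} number of the signs; in particular the sign of the product $w_1w_2w_3$ is pinned down by the orientation, and the fixed point data of $M$ is the multiset of such triples. Next I would make the equivariant connected sum precise: if $p\in M$ has triple $\{a,b,c\}$ and $q$ is a fixed point of a standard piece with triple $\{-a,-b,-c\}$, then the invariant boundary spheres $S^5$ around $p$ and $q$ carry the same $S^1$-action with opposite orientations, so they glue equivariantly and orientation-compatibly; the output is again a compact connected oriented $6$-manifold whose fixed point data is that of $M$ and of the piece, with $p$ and $q$ deleted. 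Crucially, the glue may be performed simultaneously at \emph{two} matched pairs, and this is what permits a net loss of fixed points.

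With this dictionary I would compute the data of each block and read off its move. The two poles of the linear $S^6$ carry exactly opposite triples $\{a,b,c\}$ and $\{-a,-b,-c\}$, so gluing at \emph{both} poles realizes the annihilation move that deletes one opposite pair from the data; this is the only move lowering the number of fixed points. A linear $\mathbb{CP}^3$ has four fixed points, and matching one of them to $p$ replaces $\{a,b,c\}$ by the three triples $\{a,a-b,a-c\}$, $\{b,b-a,b-c\}$, $\{c,c-a,c-b\}$. The Hirzebruch-type sixfolds $Z_1,Z_2$ (which I expect to realize as $\mathbb{P}(\mathcal{O}\oplus\mathcal{O}(n))$ over $\mathbb{CP}^2$, with six fixed points each) supply further moves that subtract weights in a controlled, continued-fraction fashion; the orientation-reversed blocks give the sign-negated moves. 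I would then observe that the $\mathbb{CP}^3$- and $Z_n$-moves function as elementary Euclidean steps on the integer weights, while the $S^6$-move does the final cancellation.

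The heart of the argument is an induction that drives any realizable data to the empty multiset. Here I would use the Atiyah--Bott--Berline--Vergne localization identities, the simplest being $\sum_p (w_1(p)w_2(p)w_3(p))^{-1}=0$, to control the extremal weights and to guarantee cancelling partners. I would induct on the largest weight magnitude $m$ present and, secondarily, on the number of fixed points attaining it. In the base case $m=1$ every triple is a sign-change of $\{1,1,1\}$, so it is of product type $+1$ or $-1$; the localization relation degenerates to $\#\{\text{type }+\}=\#\{\text{type }-\}$, and pairing opposite types and annihilating with the two-pole $S^6$-move empties the data. In the inductive step I would use the $\mathbb{CP}^3$- and $Z_n$-moves to eliminate the fixed points carrying a weight $\pm m$, replacing them by triples of strictly smaller top weight, and then cancel the surplus with $S^6$; localization supplies the matched partners needed to do this consistently. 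Since every connected sum preserves compactness, connectedness, and orientability, and the terminal manifold has empty fixed point data, its action is fixed point free, as asserted.

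The hard part will be the termination and admissibility of this reduction. The moves are rigid: a connected sum can be formed only when two triples are \emph{exactly} opposite, and every intermediate triple must keep all three weights nonzero (so, e.g., $\{a,b,c\}$ with a repeated entry cannot be split by the naive $\mathbb{CP}^3$-move), which means a single naive complexity such as $\sum_p(|w_1|+|w_2|+|w_3|)$ actually \emph{increases} under the splitting moves. I therefore expect the real work to lie in engineering a well-founded potential — built from the top weight together with the number of fixed points realizing it — that the orchestrated combination (several splitting moves followed by an annihilation) provably decreases, and in using the localization relations at each weight level to certify that a cancellable opposite pair always exists so the algorithm never gets stuck. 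Confirming that $Z_1$ and $Z_2$ alone (rather than all $Z_n$) already generate enough elementary steps, and carrying the orientation-sign bookkeeping through every move, is where the bulk of the careful effort will be concentrated.
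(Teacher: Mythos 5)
Your skeleton (encode everything in the fixed point data, induct on the largest weight, annihilate with $S^6$ glued at both poles, base case where all weights are $1$ via $\sum_p \epsilon(p)=0$) matches the paper's overall plan, but the engine of the inductive step is missing, and you say so yourself: you have no argument that the top weight ever strictly decreases, and no argument that a matchable partner always exists. The paper's solution to exactly these two problems is neither a potential function for one-point splitting moves nor the residue identity $\sum_p \epsilon(p)/(w_{p,1}w_{p,2}w_{p,3})=0$ (which is far too weak to produce a partner with prescribed weights). It is an analysis of the isotropy submanifold $M^{\mathbb{Z}_l}$, where $l>1$ is the maximal weight. If a fixed point $p$ has weight $l$ with multiplicity one, the component $F\subset M^{\mathbb{Z}_l}$ through $p$ is a $2$-sphere containing exactly one other fixed point $q$, and since $NF$ is an oriented $\mathbb{Z}_l$-equivariant bundle over the connected surface $F$, the $\mathbb{Z}_l$-representations $N_pF$ and $N_qF$ are isomorphic; chasing the orientation signs shows that $\Sigma_q$ is one of exactly four possibilities relative to $\Sigma_p=\{\epsilon(p),l,x,y\}$, namely $\{-\epsilon(p),l,x,y\}$, $\{-\epsilon(p),l,l-x,l-y\}$, or $\{\epsilon(p),l,l-x,y\}$ up to swapping $x$ and $y$ (the paper's Lemma 4.2). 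If $l$ has multiplicity two at $p$, then $F$ is $4$-dimensional and applying ABBV localization on $F$ to $\alpha=e_{S^1}(NF)$, whose integral over $F$ vanishes for degree reasons, produces a partner with identical weights and opposite sign (Lemma 4.3). Each of these finitely many cases is then killed by a connected sum performed \emph{at the two points $p$ and $q$ simultaneously} with a specific standard piece ($S^6$, $\mathbb{CP}^3$, $Z_1$, $Z_2$, or $Z_2\sharp\overline{Z_2}$, possibly with reversed orientation) whose own fixed point data contains exactly the pair $\Sigma_p,\Sigma_q$ with signs reversed. Such a move deletes both occurrences of the weight $l$ and inserts only weights strictly less than $l$, so induction on $l$ terminates with no engineered potential needed.

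Two further points. First, your $\mathbb{CP}^3$-move is a connected sum at a \emph{single} fixed point, replacing $\{a,b,c\}$ by $\{a,a-b,a-c\}$, $\{b,b-a,b-c\}$, $\{c,c-a,c-b\}$; one of the new triples still carries the top weight, so iterating such moves can never lower the maximal weight (and with signed entries it can even raise it). Your instinct that this forces delicate bookkeeping is correct, but the resolution is structural, not combinatorial: all of the paper's splitting operations are two-point moves whose admissibility is certified in advance by Lemmas 4.2 and 4.3, not by a search for partners. Second, the paper's $Z_n$ is not $\mathbb{P}(\mathcal{O}\oplus\mathcal{O}(n))$ over $\mathbb{CP}^2$ but the hypersurface $\{z_2w_3^n=z_3w_2^n\}\subset\mathbb{CP}^3\times\mathbb{CP}^1$, a $\mathbb{CP}^2$-bundle over $\mathbb{CP}^1$; moreover, besides $Z_1$ and $Z_2$ one also needs the connected sum $Z_2\sharp\overline{Z_2}$ (for the partner data $\{-\epsilon(p),l,l-x,l-x\}$ with $2x\neq l$), whose fixed point data is assembled precisely to match that case. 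Without these ingredients the proposal remains a plausible program rather than a proof.
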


For the manifolds $Z_n$, which are the 6-dimensional analogue of the Hirzebruch surfaces, see Example~\ref{e3}.

Let the circle group $S^1$ act on a $2n$-dimensional compact oriented manifold $M$ with a discrete fixed point set. Let $p$ be an isolated fixed point. The tangent space at $p$ decomposes into real 2-dimensional irreducible $S^1$-equivariant vector spaces
\begin{center}
$\displaystyle T_pM=\bigoplus_{i=1}^n L_{p,i}$,
\end{center}
where each $L_{p,i}$ is isomorphic to a complex 1-dimensional $S^1$-equivariant vector space, on which the circle acts as multiplication by $g^{w_{p,i}}$ for all $g \in S^1 \subset \mathbb{C}$, where $w_{p,i}$ is a non-zero integer. For each $i$, we choose an orientation of $L_{p,i}$ so that $w_{p,i}$ is positive. We call the positive integers $w_{p,i}$ the \textbf{weights} at $p$. Let $\epsilon_M(p)=+1$ if the orientation on $M$ agrees with the orientation on the representation space $L_{p,1} \oplus \cdots \oplus L_{p,n}$, and $\epsilon_M(p)=-1$ otherwise. We call $\epsilon_M(p)$ the \textbf{sign} of $p$. 
When there is no ambiguity, we omit the subscript and write $\epsilon(p)$ instead.
We define the \textbf{fixed point data} of $p$ to be 
\begin{center}
$\Sigma_p:=\{\epsilon(p),w_{p,1},\cdots,w_{p,n}\}$. 
\end{center}
We will always list the sign of $p$ first, followed by the weights. 
More precisely, the fixed point data of $p$ can be represented as the ordered pair $(\epsilon(p),\{w_{p,1},\cdots,w_{p,n}\})$, where $\{w_{p,1},\cdots,w_{p,n}\}$ is the multiset of the weights at $p$.
For notational simplicity, we write $\{\epsilon(p),w_{p,1},\cdots,w_{p,n}\}$ instead of the ordered pair.
When writing $\epsilon(p)$ inside the fixed point data, we omit the value 1 and only indicate the sign.
Finally, we define the \textbf{fixed point data} of $M$ to be the collection 
\begin{center}
$\displaystyle \Sigma_M:=\bigcup_{p \in M^{S^1}} \{\epsilon(p),w_{p,1},\cdots,w_{p,n}\},$
\end{center}
which consists of the fixed point data for all fixed points of $M$.

For the classification of a torus action on a manifold, the fixed point data is essential. A combinatorial version of our result provides a classification of the fixed point data of any 6-dimensional compact, oriented $S^1$-manifold with isolated fixed points.

\begin{theorem}\label{t12}
Let the circle group $S^1$ act on a 6-dimensional compact connected oriented manifold $M$ with a discrete fixed point set. 
Then we can successively apply a combination of five types of operations to reduce the fixed point data of $M$ to the empty collection. There is a definite procedure that terminates in finitely many steps.

More precisely, given the fixed point data $\Sigma_M$ of $M$, we can apply a combination of the following operations to reduce $\Sigma_M$ to the empty collection.

\begin{enumerate}
\item[(1)] Remove $\{+,A,B,C\}$ and $\{-,A,B,C\}$ together.
\item[(2)] Remove $\{\pm,A,B,C\}$ and $\{\mp,C-A,C-B,C\}$ where $A<B<C$, and add $\{\pm,A,B-A,C-A \}$ and $\{\mp, B,B-A,C-B\}$.
\item[(3)] Remove $\{\pm,A,B,C\}$ and $\{\pm,A,C-B,C\}$ where $A<B<C$, and add $\{\pm, C-B,C-A,A\}$, $\{\pm, C-B,B,A\}$, $\{\mp, C-B,B-A,A\}$, and $\{\pm, C-A,B-A,A\}$.
\item[(3')] Remove $\{\pm,A,B,C\}$ and $\{\pm,A,C-B,C\}$ where $B<A<C$, and add $\{\pm, C-B,C-A,A\}$, $\{\pm, C-B,B,A\}$, $\{\pm, C-B,A-B,A\}$, and $\{\mp, C-A,A-B,A\}$.
\item[(4)] Remove $\{\pm,A,A,C\}$ and $\{\pm,A,C-A,C\}$ where $2A<C$, and add $\{\pm,C-A,C-2A,A\}$, $\{\pm,C-A,A,A\}$, $\{\pm,C-A,A,A\}$, $\{\mp,C-2A,A,A\}$.
\item[(4')] Remove $\{\pm,A,A,C\}$ and $\{\pm,A,C-A,C\}$ where $A<C<2A$, and add $\{\mp,C-A,2A-C,A\}$, $\{\pm,C-A,A,A\}$, $\{\pm,C-A,A,A\}$, $\{\pm,2A-C,A,A\}$.
\item[(5)] Remove $\{\pm, C, A, A\}$ and $\{\mp, C, C-A, C-A\}$ where $2A<C$, and add $\{\pm,C-A,C-2A,A\}$, $\{\pm,C-A, A, A\}$, $\{\pm,C-A, A, A\}$, $\{\mp,C-2A, A, A\}$, $\{\pm, A, C-2A, C-A\}$, $\{\mp,A,C-A,C-A\}$, $\{\mp,A,C-A,C-A\}$, $\{\mp, C-2A, C-A, C-A\}$.
\end{enumerate}
\end{theorem}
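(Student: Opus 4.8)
The plan is to prove Theorem~\ref{t12} by descending induction on the largest weight $C$ occurring in $\Sigma_M$, realizing each combinatorial operation as a sequence of equivariant connected sums with the model manifolds of Section~\ref{s3}, so that the intermediate fixed point data always remains that of a genuine oriented $S^1$-manifold. The geometric input that drives the induction is the isotropy submanifold $M^{\mathbb{Z}_C}$. Since $C$ is the largest weight and all weights are positive integers, the subgroup $\mathbb{Z}_C\subset S^1$ fixes exactly the tangent directions of weight $C$; hence any $S^1$-fixed point at which $C$ occurs with multiplicity one lies on a connected component of $M^{\mathbb{Z}_C}$ that is a closed surface carrying an effective residual $S^1/\mathbb{Z}_C$-action, that is, a $2$-sphere $Z\cong\mathbb{CP}^1$ whose two poles $p,p'$ are $S^1$-fixed. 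This canonically pairs up the weight-$C$ fixed points, and the whole strategy is to resolve these $2$-spheres one at a time.

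First I would analyze the equivariant normal bundle of such a $Z$. It splits $S^1$-equivariantly as a sum of two complex line bundles of degrees $d_1,d_2$, and because the tangential weight along $Z$ equals $C$ while every weight of $M$ is at most $C$, each degree must be $0$ or $1$ after possibly interchanging the two poles. Consequently each normal weight $a$ at one pole is carried either to $a$ (a \emph{preserved} direction, $d_j=0$) or to $C-a$ (a \emph{reflected} direction, $d_j=1$) at the other pole. The three cases "no reflection", "one reflection", and "two reflections" are exactly the configurations recorded by the poles $\{+,A,B,C\},\{-,A,B,C\}$ of operation~(1), the poles $\{\pm,A,B,C\},\{\pm,A,C-B,C\}$ of operation~(3)/(3'), and the poles $\{\pm,A,B,C\},\{\mp,C-A,C-B,C\}$ of operation~(2). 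A direct orientation count shows that $\epsilon$ is reversed between the two poles precisely when the number of reflected normal directions is even (the tangential direction always contributing one reversal), which matches the sign patterns displayed in those operations; the orderings $A<B$ versus $B<A$ and $2A<C$ versus $2A>C$ only serve to keep the displayed weights positive and account for the primed variants.

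Next I would match each local model to a connected sum. The manifolds of Section~\ref{s3} carry $2$-spheres realizing each normal type: $S^6$ supplies the configuration with no reflected direction, $Z_1$ the one with a single reflected direction, and $\mathbb{CP}^3$ and $Z_2$ the ones with two reflected directions. Performing the equivariant connected sum of $M$ with the appropriate model along two fixed points, namely the two poles of the matching $2$-sphere, deletes $p,p'$ from $\Sigma_M$ and inserts the remaining fixed points of the model, all of whose weights are strictly smaller than $C$; reading off the fixed point data of $S^6$, $\mathbb{CP}^3$, $Z_1$, $Z_2$ reproduces precisely the multisets added in operations (1)--(5). Since each operation removes two occurrences of $C$ and creates none, finitely many of them delete $C$ from $\Sigma_M$ entirely and strictly lower the largest weight, so the induction proceeds. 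The base case is largest weight $1$, where every fixed point is $\{\pm,1,1,1\}$ and the Atiyah--Bott--Berline--Vergne localization of the constant $1$ gives $\sum_p\epsilon(p)=0$; thus equally many $+$ and $-$ points remain and operation~(1) clears them in pairs, reaching the empty set.

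The hard part will be the degenerate and higher-multiplicity situations. When the two normal weights at a pole coincide ($A=B$), the reflection formula of operation~(2) would produce the forbidden zero weight $B-A=0$, so a single connected sum with $\mathbb{CP}^3$ is unavailable; this is exactly why operations~(4),(4'),(5) are forced, why they introduce more new fixed points (eight in~(5)), and why they must be realized by a combination of connected sums with $Z_1$ and $Z_2$ rather than a single one. Verifying that these resolutions exist with the signs and weights cooperating is the most delicate bookkeeping. A second obstacle is a weight $C$ of multiplicity two or three, where $M^{\mathbb{Z}_C}$ is $4$- or $6$-dimensional rather than a $2$-sphere; here I expect to run the induction on the induced $S^1/\mathbb{Z}_C$-action on this lower-complexity isotropy submanifold, reducing to the surface case already treated, and to confirm that no configuration outside operations~(1)--(5) can arise for an oriented $6$-manifold. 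Establishing that the weights at the poles of every arising $2$-sphere genuinely cooperate so that the equivariant connected sum can be carried out --- the precise difficulty flagged in the introduction --- is the crux of the argument.
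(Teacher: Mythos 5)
Your overall skeleton does match the paper's proof: descending induction on the largest weight $C$, analysis of the two poles of each $2$-sphere component of $M^{\mathbb{Z}_C}$, the preserved/reflected dichotomy for normal weights together with the parity-of-reflections sign rule (this is exactly Lemma \ref{l42}; note the paper derives it more softly, from the fact that $N_qF$ and $N_{q'}F$ are isomorphic as oriented $\mathbb{Z}_C$-representations since $NF$ is an oriented bundle over the connected $F$, whereas your route through an $S^1$-equivariant splitting of the rank-$4$ normal bundle into two line bundles plus a degree bound needs that splitting to be justified), and realization of each configuration by a connected sum with $S^6$, $\mathbb{CP}^3$, $Z_1$, $Z_2$, or $Z_2\sharp\overline{Z_2}$.

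The genuine gap is the multiplicity-two case, where a fixed point has weights $\{C,C,a\}$. Here the component $F$ of $M^{\mathbb{Z}_C}$ is a closed oriented $4$-manifold, and your plan to ``run the induction on the induced $S^1/\mathbb{Z}_C$-action, reducing to the surface case'' cannot work as stated: the residual action on $F$ has all weights equal to $1$ at its isolated fixed points, so $F$ contains no further isotropy submanifolds and no $2$-spheres to which the surface analysis applies, and it may contain many fixed points. What is actually needed is the refined pairing that the number of fixed points in $F$ with data $\{+,C,C,a\}$ equals the number with $\{-,C,C,a\}$. Applying Theorem \ref{t29} to $F$ only gives that the numbers of points with $\epsilon_F=+1$ and $\epsilon_F=-1$ agree; it says nothing about how $\epsilon_F$ correlates with the normal data ($\epsilon_N$, and normal weight $a$ versus $C-a$). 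The paper closes exactly this hole in Lemma \ref{l43} by a cohomological input absent from your proposal: $\int_F e_{S^1}(NF)=0$ for degree reasons (the class has degree $2$ while $\dim F=4$), which ABBV localization converts into the needed count $s=t$. The purely combinatorial counting results do not imply this. Smaller omissions: multiplicity three is impossible for an effective action (it would force $M^{\mathbb{Z}_C}=M$), so it need not be ``handled''; the degenerate reflected case $2a=C$ forces $a=1$, $C=2$ by effectiveness and is treated by the separate counting Lemma \ref{l51}, which you do not address; and the bookkeeping you flag as the crux is real but routine once the models are fixed --- note that Operation (5) is realized by $Z_2\sharp\overline{Z_2}$, not by a combination of $Z_1$ and $Z_2$ as you suggest.
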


The classification results of this paper may provide useful insights for classifying circle (and torus) actions on other types of 6-manifolds, such as (almost) complex and symplectic manifolds. For example, if one aims to classify an action on an (almost) complex 6-manifold $M$ with a finite fixed point set, one can verify whether the complex weights at the fixed points satisfy the conclusion of Theorem \ref{t12} when translated into the fixed point data of $M$ as an oriented manifold.

Theorem \ref{t12} implies that, given any such manifold $M$, we must be able to perform one of the operations described in Theorem \ref{t12} on the fixed point data of $M$. After applying one such operation, we can again perform another (unless we have already obtained the empty collection), and so on, until we arrive at the empty collection. The sign convention in Theorem \ref{t12} has the following implication: suppose the fixed point data of $M$ includes $\{-,A,B,C\}$ and $\{+,C-A,C-B,C\}$. Then we may apply Operation (2) to remove these, replacing them with $\{-,A,B-A,C-A\}$ and $\{+,B,B-A,C-B\}$. If the fixed point data of $M$ includes $\{+,A,B,C\}$ and $\{-,C-A,C-B,C\}$ (i.e., the same data as above but with opposite signs), then we can again apply Operation (2), this time removing these and adding $\{+,A,B-A,C-A\}$ and $\{-,B,B-A,C-B\}$.

The operations in Theorem \ref{t12} are designed to remove the largest weight $C$; every weight introduced in the new fixed point data is strictly smaller than $C$. Therefore, the process described in Theorem \ref{t12} terminates after finitely many steps.
Operation (1) corresponds to a connected sum with $S^6$; Operation (2) corresponds to a connected sum with $\mathbb{CP}^3$ (or $\overline{\mathbb{CP}^3}$); Operations (3) and (3') correspond to a connected sum with the manifold $Z_1$ (or $\overline{Z_1}$); Operations (4) and (4') correspond to a connected sum with the manifold $Z_2$ (or $\overline{Z_2}$); and Operation (5) corresponds to a connected sum with the manifold $Z_2 \sharp \overline{Z_2}$ from Example \ref{e8}, which is a connected sum at fixed points of two copies of $Z_2$, one with reversed orientation.
Unlike in standard conventions, we take each connected sum at two fixed points of $M$ and at two fixed points of an $S^1$-action on another manifold.
The only difference between Operations (3) and (3') is a sign issue that arises at two fixed points. In Operation (3), the fixed point data involved are $\{\mp, C-B, B-A, A\}$ and $\{\pm, C-A, B-A, A\}$, while in Operation (3'), they are $\{\pm, C-B, A-B, A\}$ and $\{\mp, C-A, A-B, A\}$. In the former case, $A < B$, and in the latter, $B < A$. Notably, if we allow weights to be negative, then the fixed point data $\{\pm, C-B, B-A, A\}$ in the former case becomes $\{\mp, C-B, A-B, A\}$ in the latter.
Similarly, the difference between Operations (4) and (4') is also due solely to such a sign issue at two fixed points.

If a 6-dimensional compact oriented $S^1$-manifold has exactly two fixed points, then the two fixed points have the same multiset of weights and opposite signs (see Theorem \ref{t28}). By performing Operation (1) of Theorem \ref{t12} once on the fixed point data of the manifold, we obtain the empty collection.

We illustrate how Theorem \ref{t12} works with an example. 

\begin{exa}
Let the circle act on $\mathbb{CP}^3$ by
\begin{center}
$g \cdot [z_0:z_1:z_2:z_3]=[z_0:g^a z_1:g^b z_2:g^c z_3]$
\end{center}
for some positive integers $a<b<c$, for all $g \in S^1 \subset \mathbb{C}$ and all $[z_0:z_1:z_2:z_3] \in \mathbb{CP}^3$. This action has four fixed points $[1:0:0:0]$, $[0:1:0:0]$, $[0:0:1:0]$, and $[0:0:0:1]$. The fixed point data of $\mathbb{CP}^3$ is
\begin{center}
$\{+,a,b,c\}$, $\{-,a,b-a,c-a\}$, $\{+,b,b-a,c-b\}$, $\{-,c,c-a,c-b\}$,
\end{center}
see Example \ref{e2}. Because the fixed points $[1:0:0:0]$ and $[0:0:0:1]$ have the largest weight $c$, we perform Operation (2) (which corresponds to taking an equivariant connected sum with $\overline{\mathbb{CP}^3}$ at the two fixed points $[1:0:0:0]$ and $[0:0:0:1]$ for both manifolds). This removes $\{+,a,b,c\}$ and $\{-,c-a,c-b,c\}$ and adds $\{+,a,b-a,c-a\}$ and $\{-,b,b-a,c-b\}$, resulting in the collection
\begin{center}
$\{-,a,b-a,c-a\}$, $\{+,b,b-a,c-b\}$, $\{+,a,b-a,c-a\}$, $\{-,b,b-a,c-b\}$.
\end{center}
Next, we perform Operation (1) twice on this collection: first to remove $\{-,a,b-a,c-a\}$ and $\{+,a,b-a,c-a\}$, and second to remove $\{+,b,b-a,c-b\}$ and $\{-,b,b-a,c-b\}$, thus reaching the empty collection. 
\end{exa}

Of course, for the action on $\mathbb{CP}^3$, one can take an equivariant connected sum with the same action on $\overline{\mathbb{CP}^3}$ at all fixed points to obtain a fixed-point-free $S^1$-manifold. However, for a general 6-dimensional compact oriented $S^1$-manifold $M$ with a discrete fixed point set, the situation is different: even if the fixed point data of $M$ contains $\{+,a,b,c\}$ and $\{-,c,c-a,c-b\}$, it need not contain $\{+,b,b-a,c-b\}$ and $\{+,a,b-a,c-a\}$.

The structure of this paper is as follows. In Section \ref{s2}, we review the necessary background on oriented $S^1$-manifolds with discrete fixed points. In Section \ref{s3}, we describe the $S^1$-actions on $S^6$, $\mathbb{CP}^3$, $Z_n$, and $Z_2 \sharp \overline{Z_2}$ that are needed to prove Theorems \ref{t11} and \ref{t12}. In Section \ref{s4}, we establish relationships between the fixed point data of fixed points in an isotropy submanifold. Finally, in Section \ref{s5}, we prove Theorems \ref{t11} and \ref{t12}.

\section*{Acknowledgements}

The author would like to thank Mikiya Masuda and Michael Wiemeler for fruitful discussions on the orientability of isotropy submanifolds that appear in Lemmas~\ref{codim-even} and \ref{dim2}.

\section{Background} \label{s2}

For an action of a group $G$ on a manifold $M$, we denote its fixed point set by $M^G$, that is,
\begin{center}
$M^G=\{m \in M \mid g \cdot m=m \textrm{ for all }g \in G\}$.
\end{center}
If $H$ is a subgroup of $G$, then $H$ acts on $M$ as a subgroup of $G$, and we denote its fixed point set by $M^H$.

Let the circle group $S^1$ act on a manifold $M$. The \textbf{equivariant cohomology} of $M$ is 
\begin{center}
$\displaystyle H_{S^{1}}^{*}(M)=H^{*}(M \times_{S^{1}} S^{\infty})$. 
\end{center}
Suppose that $M$ is oriented and compact. The projection map $\pi : M \times_{S^{1}} S^{\infty} \rightarrow \mathbb{CP}^{\infty}$ induces a natural push-forward map
\begin{center}
$\displaystyle \pi_{*} : H_{S^{1}}^{i}(M;\mathbb{Z}) \rightarrow H^{i-\dim M}(\mathbb{CP}^{\infty};\mathbb{Z})$
\end{center}
for all $i \in \mathbb{Z}$. This map is given by integration over the fiber $M$, and also denoted by $\int_M$.

\begin{theo} \emph{[ABBV localization theorem]} \cite{AB, BV} \label{t21}
Let the circle group $S^1$ act on a compact oriented manifold $M$. Let $ \alpha \in H_{S^{1}}^{\ast}(M;\mathbb{Q})$. As an element of $\mathbb{Q}(t)$,
\begin{center}
$\displaystyle \int_{M} \alpha = \sum_{F \subset M^{S^{1}}} \int_{F} \frac{\alpha|_{F}}{e_{S^{1}}(N_{F})}$,
\end{center}
where the sum is taken over all fixed components, and $e_{S^{1}}(N_{F})$ denotes the equivariant Euler class of the normal bundle to $F$ in $M$.
\end{theo}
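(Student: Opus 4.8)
The plan is to prove the localization formula by the algebraic (Atiyah--Bott) route, treating $H_{S^1}^*(M;\mathbb{Q})$ as a module over $H_{S^1}^*(\mathrm{pt};\mathbb{Q})=\mathbb{Q}[t]$ and working over the fraction field $\mathbb{Q}(t)$. Write $i_F\colon F\hookrightarrow M$ for the inclusion of a fixed component and $e_F=e_{S^1}(N_F)$ for the equivariant Euler class of its normal bundle. The key structural input I would establish first is the \emph{abstract localization theorem}: the restriction $i^*\colon H_{S^1}^*(M)\otimes_{\mathbb{Q}[t]}\mathbb{Q}(t)\to H_{S^1}^*(M^{S^1})\otimes_{\mathbb{Q}[t]}\mathbb{Q}(t)$ is an isomorphism.

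To get that structural input, I would use the long exact sequence of the pair $(M,M^{S^1})$ and show $H_{S^1}^*(M,M^{S^1})\otimes\mathbb{Q}(t)=0$. By excision this reduces to the complement of a tubular neighborhood of $M^{S^1}$, on which the $S^1$-action has only finite stabilizers; stratifying by orbit type and applying Mayer--Vietoris, each stratum of type $S^1/\Gamma$ has $H_{S^1}^*\cong H^*(B\Gamma;\mathbb{Q})$, a $\mathbb{Q}[t]$-torsion module, hence killed by $\otimes\mathbb{Q}(t)$. A descending induction over the finite filtration by orbit type then annihilates the whole relative group.

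Next I would install the two computational facts needed to produce the formula. First, the self-intersection (excess) formula $i_F^*(i_F)_*\eta=e_F\cdot\eta$ for $\eta\in H_{S^1}^*(F)$, together with the observation that $e_F$ is invertible in $H_{S^1}^*(F)\otimes\mathbb{Q}(t)$: its leading term in $t$ is $t^{\,k}\prod w_i$ with $k=\operatorname{rank}_{\mathbb{C}}N_F$ and the $w_i$ the nonzero rotation weights of $N_F$, a unit in $\mathbb{Q}(t)$, precisely because $N_F$ carries no trivial $S^1$-summand, so the higher-degree corrections are nilpotent. Second, for distinct components $F\neq G$ we have $i_G^*(i_F)_*=0$ by disjointness of supports. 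With these in hand I set $\mu=\sum_F(i_F)_*\!\big(e_F^{-1}\,i_F^*\alpha\big)$ and compute $i_G^*\mu=e_G\cdot e_G^{-1}\,i_G^*\alpha=i_G^*\alpha$ for every $G$; since $i^*$ is injective after localization, $\mu=\alpha$. Applying $\int_M=\pi^M_*$ and the functoriality $\int_M\circ(i_F)_*=\int_F$ yields $\int_M\alpha=\sum_F\int_F e_F^{-1}\,i_F^*\alpha$, the asserted identity in $\mathbb{Q}(t)$.

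The main obstacle is the abstract localization theorem of the first step: everything else is formal manipulation with pushforwards and the projection formula, but the vanishing $H_{S^1}^*(M,M^{S^1})\otimes\mathbb{Q}(t)=0$ genuinely uses that off the fixed set the stabilizers are finite and that $H^*(B\Gamma;\mathbb{Q})$ is $\mathbb{Q}[t]$-torsion, and care is needed to run the orbit-type stratification and Mayer--Vietoris over a possibly singular orbit space in the compact setting. An alternative I would keep in reserve is the differential-geometric argument of Berline--Vergne in the Cartan model: choosing an invariant metric with $1$-form $\theta$ dual to the generating vector field $X$, the equivariant form $d_X\theta$ is invertible away from $M^{S^1}$, so $\alpha$ differs from an exact form by a term concentrated near $M^{S^1}$, and a Gaussian normal-bundle computation of Mathai--Quillen type identifies the local contribution with $\int_F e_F^{-1}i_F^*\alpha$; there the analytic handling of the singularity of $(d_X\theta)^{-1}$ along $M^{S^1}$ is the corresponding hard point.
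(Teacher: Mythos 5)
The paper does not prove this theorem at all---it imports it verbatim from Atiyah--Bott \cite{AB}---and your proposal is a correct outline of exactly that original argument: the abstract localization theorem (restriction $i^*$ becomes an isomorphism after tensoring with $\mathbb{Q}(t)$, proved via the pair $(M,M^{S^1})$ and the orbit-type stratification with finite stabilizers), the self-intersection formula $i_F^*(i_F)_*\eta=e_{S^1}(N_F)\cdot\eta$, invertibility of $e_{S^1}(N_F)$ over $\mathbb{Q}(t)$ because $N_F$ has no zero-weight summand and the correction terms are nilpotent, and the formal pushforward computation identifying $\alpha$ with $\sum_F (i_F)_*\bigl(e_{S^1}(N_F)^{-1}i_F^*\alpha\bigr)$. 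Since your route coincides with that of the cited source, there is nothing to flag; the step you rightly single out as carrying the real content---the vanishing of $H_{S^1}^*(M,M^{S^1})\otimes\mathbb{Q}(t)$---is indeed where the work lies, and the rest is formal.
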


For a compact oriented manifold $M$, the Atiyah–Singer index theorem states that the analytic index of the signature operator on $M$ is equal to its topological index. As an application, for a compact oriented $S^1$-manifold with a discrete fixed point set, the theorem yields the following formula.

\begin{theo} \emph{[Atiyah-Singer index theorem]} \cite{AS} \label{t22} Let the circle group $S^1$ act on a $2n$-dimensional compact oriented manifold $M$ with a discrete fixed point set. Then the signature of $M$ is
\begin{center}
$\displaystyle \textrm{sign}(M) = \sum_{p \in M^{S^1}} \epsilon(p) \prod_{i=1}^{n} \frac{1+t^{w_{p,i}}}{1-t^{w_{p,i}}}$
\end{center}
for all indeterminate $t$, and is a constant. \end{theo}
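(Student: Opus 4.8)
The plan is to obtain this formula by specializing the Atiyah--Singer $G$-index theorem \cite{AS} to the signature operator. First I would recall that for a $2n$-dimensional compact oriented manifold the signature $\mathrm{sign}(M)$ is the index of the signature operator $d+d^{*}\colon \Omega^{+}(M)\to\Omega^{-}(M)$, where $\Omega^{\pm}$ are the $\pm 1$-eigenspaces of the natural involution on forms; equivalently $\mathrm{sign}(M)=\dim H^{+}-\dim H^{-}$ for the induced involution on $H^{*}(M;\mathbb{C})$. Choosing an $S^1$-invariant Riemannian metric makes the Hodge star, and hence the whole signature complex, $S^1$-equivariant, so for each $g\in S^1$ one has the equivariant signature $\mathrm{sign}(g,M)=\mathrm{tr}(g\mid H^{+})-\mathrm{tr}(g\mid H^{-})$, a virtual character of $S^1$ whose value at $g=1$ is the ordinary $\mathrm{sign}(M)$.

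Next I would prove that $\mathrm{sign}(g,M)$ is in fact constant in $g$. Since $S^1$ is connected, each $g$ acts by a diffeomorphism isotopic to the identity, so $g$ acts as the identity on $H^{*}(M;\mathbb{C})$ and therefore on $H^{\pm}$. Thus $\mathrm{tr}(g\mid H^{\pm})=\dim H^{\pm}$ for all $g$, giving $\mathrm{sign}(g,M)=\mathrm{sign}(M)$ for every $g$. This is precisely the assertion that the right-hand side, once identified, is independent of $t$.

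Then I would apply the Atiyah--Singer fixed point formula for the signature operator (the $G$-signature theorem) \cite{AS} to express $\mathrm{sign}(g,M)$ as a sum of local contributions over the fixed set. As the fixed point set is discrete, each fixed component is a single point $p$, at which $g$ acts on $T_pM=\bigoplus_{i=1}^{n}L_{p,i}$ by $g^{w_{p,i}}$ on $L_{p,i}$. In the orientation for which all weights $w_{p,i}$ are positive, the local signature term at $p$ is $\prod_{i=1}^{n}\frac{1+g^{w_{p,i}}}{1-g^{w_{p,i}}}$, and the sign $\epsilon(p)$ records whether this orientation agrees with the given one on $M$. Summing over $p$ and writing $t=g$ yields
\[
\mathrm{sign}(M)=\sum_{p\in M^{S^1}}\epsilon(p)\prod_{i=1}^{n}\frac{1+t^{w_{p,i}}}{1-t^{w_{p,i}}},
\]
valid for every $g=t\in S^1$ away from the finitely many roots of unity annihilating a denominator. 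Since both sides agree at infinitely many points of $S^1$ and the right-hand side is a rational function of $t$, the identity holds identically as rational functions, and by the previous paragraph the common value is the constant $\mathrm{sign}(M)$.

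The main work is the bookkeeping in the third step: correctly pinning down the isolated-fixed-point contribution of the signature operator and reconciling the various sign conventions so that the local term is exactly $\prod_{i}\frac{1+t^{w_{p,i}}}{1-t^{w_{p,i}}}$ weighted by $\epsilon(p)$. Concretely, writing $g=e^{2\pi i\phi}$, the Lefschetz number of the signature complex localizes at $p$ to a product of cotangent factors, and the rewriting $\frac{1+g^{w_{p,i}}}{1-g^{w_{p,i}}}=i\cot(\pi w_{p,i}\phi)$ identifies each factor; all orientation discrepancies between the chosen positive-weight orientation and the given orientation of $M$ are absorbed into the single sign $\epsilon(p)$. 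Once this local computation and the sign convention are settled, the constancy statement and the reduction to isolated points follow immediately.
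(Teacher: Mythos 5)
Your proposal is correct and takes essentially the same route as the paper: the paper does not reprove this statement but presents it as the specialization of the Atiyah--Singer $G$-signature theorem \cite{AS} to an $S^1$-action with discrete fixed point set, which is exactly the derivation you outline (equivariant signature via the signature complex, constancy because the connected group $S^1$ acts trivially on $H^*(M;\mathbb{C})$, localization at the isolated fixed points, with all orientation discrepancies absorbed into $\epsilon(p)$). The only point to tighten is that the purely point-supported localization is guaranteed only for $g$ of infinite order, since for a root of unity $g$ the fixed set $M^g$ may be strictly larger than $M^{S^1}$; as such $g$ are dense in $S^1$, your closing rational-function argument already handles this.
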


As an application of Theorem \ref{t22}, we obtain the following result.

\begin{pro}\label{p23}
Let the circle act on a compact oriented manifold $M$ with a discrete fixed point set. Suppose that $\dim M \equiv 2 \mod 4$. Then the signature of $M$ vanishes, and the number of fixed points $p$ with $\epsilon(p)=+1$ and the number of fixed point $p$ with $\epsilon(p)=-1$ are equal.
\end{pro}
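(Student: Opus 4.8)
The plan is to read off both conclusions directly from the signature formula of Theorem \ref{t22}, using the fact that $\mathrm{sign}(M)$ is a \emph{constant} in the indeterminate $t$. Write $\dim M = 2n$; the hypothesis $\dim M \equiv 2 \pmod 4$ is exactly the statement that $n$ is odd, and this parity is what powers the whole argument. The only computation I need is the behavior of a single factor $\frac{1+t^{w}}{1-t^{w}}$ under the two manipulations $t \mapsto t^{-1}$ and $t \to \infty$, so there is nothing to grind through.

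First I would establish that the signature vanishes, by substituting $t \mapsto t^{-1}$. Clearing negative exponents gives $\frac{1+t^{-w}}{1-t^{-w}} = \frac{t^{w}+1}{t^{w}-1} = -\frac{1+t^{w}}{1-t^{w}}$, so each factor simply changes sign. Since the contribution of each fixed point is a product of $n$ such factors and $n$ is odd, replacing $t$ by $t^{-1}$ on the right-hand side of the formula in Theorem \ref{t22} multiplies every summand by $(-1)^{n} = -1$. But the left-hand side $\mathrm{sign}(M)$ is independent of $t$, so the substitution leaves it unchanged; hence $\mathrm{sign}(M) = -\,\mathrm{sign}(M)$, which forces $\mathrm{sign}(M) = 0$.

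Next I would count signs by taking the limit $t \to \infty$ along the positive reals in the same formula. Because every weight $w_{p,i}$ is a positive integer, $t^{w_{p,i}} \to +\infty$ and each factor $\frac{1+t^{w_{p,i}}}{1-t^{w_{p,i}}} \to -1$, so the product over the $n$ weights at a fixed point $p$ tends to $(-1)^{n} = -1$. The right-hand side therefore tends to $-\sum_{p \in M^{S^1}} \epsilon(p)$, and this limit must equal the constant value $\mathrm{sign}(M) = 0$ found above. Thus $\sum_{p} \epsilon(p) = 0$, and since this sum is precisely the number of fixed points with $\epsilon(p)=+1$ minus the number with $\epsilon(p)=-1$, the two counts coincide.

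The main point requiring care—rather than a genuine obstacle—is the justification for substituting $t \mapsto t^{-1}$ and for passing to the limit $t \to \infty$ inside an identity of rational functions. Both operations are legitimate precisely because $\mathrm{sign}(M)$ is a genuine constant in $\mathbb{Q}(t)$, so evaluating or taking limits of the right-hand side along any convenient path must reproduce that same constant. Beyond this observation, the result is a purely formal consequence of Theorem \ref{t22}.
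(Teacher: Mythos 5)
Your proof is correct, but it reaches the first conclusion by a genuinely different route than the paper. For the vanishing of the signature, the paper does not use the fixed-point formula at all: it invokes the Hirzebruch signature theorem, noting that $\mathrm{sign}(M)$ equals the $L$-genus, which is automatically zero when $\dim M \not\equiv 0 \pmod 4$ because the $L$-class has components only in degrees divisible by $4$. You instead extract the vanishing formally from Theorem \ref{t22} itself, via the involution $t \mapsto t^{-1}$: each factor $\frac{1+t^{w}}{1-t^{w}}$ is odd under this substitution, so with $n$ odd the whole right-hand side changes sign while the left-hand side, being a constant in $\mathbb{Q}(t)$, does not; hence $\mathrm{sign}(M)=-\mathrm{sign}(M)=0$. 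For the second conclusion both arguments evaluate the (constant) right-hand side at a convenient point, the paper at $t=0$ where each factor equals $1$, you in the limit $t\to\infty$ where each factor tends to $-1$ and the odd parity of $n$ again enters; these are interchangeable, and indeed after your involution step the evaluation at $t=0$ would have given $\sum_p \epsilon(p)=0$ slightly more directly. What your route buys is self-containment: everything follows from the single identity in Theorem \ref{t22}, with no appeal to the signature theorem or to $L$-classes. What the paper's route buys is brevity and a sharper separation of content: the vanishing of the signature is exhibited as a purely topological fact about manifolds of dimension $\equiv 2 \pmod 4$, independent of the existence of any circle action, with the fixed-point formula used only for the counting statement.
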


\begin{proof}
By the Hirzebruch signature theorem \cite{H}, the signature of $M$ is equal to the $L$-genus of $M$. Since $\dim M \neq 0 \mod 4$, the $L$-genus of $M$ vanishes; thus $\mathrm{sign}(M)=0$. Taking $t=0$ in Theorem \ref{t22},
\begin{center}
$\displaystyle \textrm{sign}(M) = \sum_{p \in M^{S^1}} \epsilon(p)$
\end{center}
and this proposition follows. \end{proof}

Let $a$ be the smallest positive weight; $a=\min\{w_{p,i} \, | \, 1 \leq i \leq n, p \in M^{S^1}\}$. Manipulating the index formula of Theorem \ref{t22} as
\begin{center}
$\displaystyle \textrm{sign}(M) = \sum_{p \in M^{S^1}} \epsilon(p) \prod_{i=1}^{n} \frac{1+t^{w_{p,i}}}{1-t^{w_{p,i}}}= \sum_{p \in M^{S^1}} \epsilon(p) \prod_{i=1}^{n} [(1+t^{w_{p,i}}) \sum_{j=0}^n t^{jw_{p,i}}]=\sum_{p \in M^{S^1}} \epsilon(p) \prod_{i=1}^{n} (1+2 \sum_{j=0}^n t^{jw_{p,i}})$
\end{center}
and comparing the coefficients of the $t^a$-terms, the below lemma holds.

\begin{lem} \cite{J1, M} \label{l24}
Let the circle act on a compact oriented manifold $M$ with a discrete fixed point set. Let $a$ be the smallest positive weight. Then
\begin{center}
$\displaystyle \sum_{p \in M^{S^1}, \epsilon(p)=+1} N_p(a)=\sum_{p \in M^{S^1}, \epsilon(p)=-1} N_p(a)$,
\end{center}
where $N_p(a)=|\{i \, : \, w_{p,i}=a, 1 \leq i \leq n\}|$ is the number of times weight $a$ occurs at $p$.
\end{lem}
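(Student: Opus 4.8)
The plan is to start from the signature formula of Theorem \ref{t22} and extract information by comparing coefficients of the smallest positive power of $t$, exactly along the lines of the computation set up immediately before the lemma. The crucial structural fact is that $\textrm{sign}(M)$ is a fixed integer, independent of $t$; hence in the expansion of the right-hand side as a power series about $t=0$, the coefficient of every positive power of $t$ must vanish. I will apply this to the coefficient of $t^a$.

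First I would rewrite each factor as a formal power series. Using $\frac{1}{1-t^w}=\sum_{j\geq0}t^{jw}$ one obtains $\frac{1+t^w}{1-t^w}=1+2\sum_{j\geq1}t^{jw}$, so that
\[
\textrm{sign}(M)=\sum_{p\in M^{S^1}}\epsilon(p)\prod_{i=1}^n\Big(1+2\sum_{j\geq1}t^{jw_{p,i}}\Big).
\]
Next I would isolate the coefficient of $t^a$. Since $a=\min\{w_{p,i}\}$ is the smallest positive weight over all fixed points and all weights are positive integers, every monomial of positive degree appearing in $\prod_{i=1}^n\big(1+2\sum_{j\geq1}t^{jw_{p,i}}\big)$ has degree at least $a$. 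A term $t^a$ can arise only by selecting the summand $2t^{w_{p,i}}$ (that is, $j=1$) from a single factor with $w_{p,i}=a$ and the constant $1$ from every other factor: any choice involving two or more nontrivial summands contributes degree at least $2a>a$, while a single summand $2t^{jw_{p,i}}$ equals a multiple of $t^a$ only when $j=1$ and $w_{p,i}=a$. Thus the coefficient of $t^a$ contributed by the fixed point $p$ is exactly $2N_p(a)$.

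Finally I would assemble and conclude. Summing over all fixed points, the coefficient of $t^a$ in $\textrm{sign}(M)$ equals $2\sum_{p\in M^{S^1}}\epsilon(p)N_p(a)$. Because $\textrm{sign}(M)$ is constant, this coefficient is $0$, so $\sum_{p}\epsilon(p)N_p(a)=0$, and separating the terms with $\epsilon(p)=+1$ from those with $\epsilon(p)=-1$ yields precisely the claimed identity.

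As for the main obstacle, there is no deep difficulty here: the entire content lies in the bookkeeping. The one point that genuinely requires care is justifying that no cross term—a product of two or more nontrivial geometric-series contributions, whether coming from distinct weights or from repetitions of the minimal weight—can produce a $t^a$ term. This is exactly where the minimality of $a$ among the positive integer weights is essential, and it is the step I would state most carefully to make the coefficient extraction airtight.
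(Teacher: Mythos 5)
Your proof is correct and is essentially the paper's own argument: the paper proves Lemma \ref{l24} by expanding each factor of the signature formula of Theorem \ref{t22} as $1+2\sum_{j\geq 1}t^{jw_{p,i}}$ and comparing coefficients of $t^a$, using that $\mathrm{sign}(M)$ is constant, exactly as you do. Your writeup is in fact slightly more careful than the paper's sketch, since you justify why no cross terms or higher harmonics $jw_{p,i}$ with $j\geq 2$ can contribute to the $t^a$ coefficient.
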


From now on, we discuss the orientability of isotropy submanifolds.
Consider an effective circle action on a compact oriented manifold $M$, and let $w \geq 2$ be an integer. As a subgroup of $S^1$, the group $\mathbb{Z}_w$ acts on $M$. The set $M^{\mathbb{Z}_w}$ of points in $M$ fixed by the $\mathbb{Z}_w$-action is a union of lower-dimensional closed submanifolds.
Suppose that the action on $M$ has isolated fixed points. If $\dim M = 4$, then any component of $M^{\mathbb{Z}_w}$ that contains an $S^1$-fixed point is orientable (see \cite[Lemma 2.1]{JM}).
On the other hand, for $2n > 4$, there exists an $S^1$-action on a $2n$-dimensional closed oriented manifold whose $\mathbb{Z}_2$-fixed component contains $S^1$-fixed points and is not orientable \cite{W}. This contrasts with the claim in \cite[Lemma 1]{HH} that for even $w$, any component of $M^{\mathbb{Z}_w}$ containing an $S^1$-fixed point is orientable.

\begin{lem} \label{codim-even}
Let the circle act effectively on an orientable manifold $M$. Let $w \geq 2$ be an integer. Then each component of $M^{\mathbb{Z}_w}$ has even codimension.
\end{lem}

\begin{proof}
Let $F$ be a component of $M^{\mathbb{Z}_w}$ and let $m$ be a point in $F$. The group $\mathbb{Z}_w$ acts trivially on the tangent space $T_mF$ of $F$ at $m$, and acts non-trivially on each irreducible of the normal space $N_mF$ of $F$ at $m$. The irreducible $\mathbb{Z}_w$-representations are $\mathbb{R}$, on which a generator of $\mathbb{Z}_w$ acts as multiplication by $-1$, or $\mathbb{C}$, on which a generator of $\mathbb{Z}_w$ acts as multiplication by a root of unity. Since the $S^1$-action on $T_mM$ is orientation preserving, the non-trivial irreducible representation of $Z_w$ of real dimension one appears with even multiplicity in $N_mF$. Therefore, the normal space $N_mF$ has even dimension.
\end{proof}

If $w$ is odd, then $M^{\mathbb{Z}_w}$ is orientable.

\begin{lem} \label{odd}
Let the circle act effectively on an orientable manifold $M$. Let $w \geq 3$ be an odd integer. Then each component of $M^{\mathbb{Z}_w}$ is orientable.
\end{lem}

\begin{proof}
Let $F$ be a component of $M^{\mathbb{Z}_w}$ and let $m$ be a point in $F$. Since $w \geq 3$ is odd, the normal space $N_mF$ of $F$ at $m$ decomposes into the sum of complex one-dimensional vector spaces, on which a generator of $\mathbb{Z}_w$ acts as a $w$-th root of unity. Thus, the $\mathbb{Z}_w$-action on $N_mF$ is orientation preserving. Since this holds for all points in $F$, the normal bundle $NF$ of $F$ in $M$ is orientable. Therefore, $F$ is orientable.
\end{proof}

When $w \geq 3$, each codimension 2 submanifold of $M^{\mathbb{Z}_w}$ is orientable.

\begin{lem} \label{codim2}
Let the circle group $S^1$ act on an orientable manifold $M$ with isolated fixed points. Let $w \geq 3$ be an integer. Let $F$ be a component of $M^{\mathbb{Z}_w}$. If $F$ has codimension 2, then $F$ is orientable.
\end{lem}

\begin{proof}
Since $F$ has codimension 2, for any $m \in F$, the normal space $N_mF$ of $F$ at $m$ in $M$ is a real 2-dimensional vector space. Since $w \geq 3$, the $\mathbb{Z}_w$-representation of $N_mF$ is isomorphic to a complex $1$-dimensional vector space on which the group $\mathbb{Z}_w$ acts by multiplication by a $w$-th root of unity. Hence this action on $N_mF$ is orientation preserving. Since this holds for all $m \in F$, it follows that the normal bundle $NF$ of $F$ in $M$ is orientable. Therefore, $F$ is orientable.
\end{proof}

Next, a $2$-dimensional component of $M^{\mathbb{Z}_w}$ containing an $S^1$-fixed point is also orientable.

\begin{lem} \label{dim2}
Let the circle group $S^1$ act on an orientable manifold $M$. Let $p$ be an isolated fixed point. Let $w \geq 2$ and let $F$ be a $2$-dimensional component of $M^{\mathbb{Z}_w}$ that contains $p$. Then $F$ is orientable.
\end{lem}

\begin{proof}
Since $M$ has an isolated fixed point $p$, the dimension of $M$ is even; this also follows from Lemma~\ref{codim-even} since $F$ is a component of $M^{\mathbb{Z}_w}$ and has dimension $2$.
To prove this lemma, assume on the contrary that $F$ is not orientable. The circle group $S^1$ acts on $F$ as a restriction, and this $S^1$-action on $F$ has a fixed point $p$ because the fixed point set of this $S^1$-action on $F$ is equal to the intersection of $F$ and $M^{S^1}$, that is, $F^{S^1}=F \cap M^{S^1}$. By the classification of $S^1$-actions on 2-dimensional compact manifolds (see, e.g., \cite{Au2}), $F$ is $\mathbb{RP}^2$; moreover, the action of $S^1/\mathbb{Z}_w$ on $F$ has $Z:=\mathbb{RP}^1$ as a component of the $\mathbb{Z}_2$-fixed point set, where we regard $\mathbb{Z}_2$ as a subgroup of $S^1/\mathbb{Z}_w$.
Then $Z$ is a $\mathbb{Z}_{2w}$-fixed component of $M$. This contradicts Lemma~\ref{codim-even}, since $Z$ has dimension one and $M$ has even dimension.
\end{proof}

It is well-known that if the number of fixed points is odd, the dimension of the manifold is a multiple of 4.

\begin{cor} \label{c27} \cite{J1} Let the circle act on a compact oriented manifold $M$. If the number of fixed points is odd, the dimension of $M$ is divisible by four. \end{cor}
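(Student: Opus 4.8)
The plan is to obtain this as an immediate parity consequence of the results just proved; indeed the statement is essentially the contrapositive of part of Proposition \ref{p23}. First I would observe that a finite (in particular odd) fixed point set is discrete, so the results of this section apply, and that $\dim M$ is necessarily even, say $\dim M = 2n$: at an isolated fixed point the tangent representation has trivial fixed subspace and hence decomposes into two-dimensional rotation modules. (If $\dim M$ were odd the fixed subspace at any fixed point would be positive-dimensional, contradicting isolation.) The task then reduces to showing that an odd number of fixed points forces $n$ to be even.

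The shortest route runs through Proposition \ref{p23}. If $\dim M \equiv 2 \mod 4$, that proposition already gives that the fixed points of sign $+1$ and of sign $-1$ are equinumerous, so their total is even. Contraposing, an odd number of fixed points rules out $\dim M \equiv 2 \mod 4$, leaving $\dim M \equiv 0 \mod 4$.

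Following instead the phrasing of the surrounding text, I would derive it from Lemma \ref{l26}. Summing the congruence $\sum_{p \in M^{S^1}} N_p(w) \equiv 0 \mod 2$ over all positive integers $w$ — a finite sum, since only finitely many weights occur — yields $\sum_w \sum_{p \in M^{S^1}} N_p(w) \equiv 0 \mod 2$. The left-hand side counts each of the $n$ weights at each fixed point exactly once, so it equals $n \cdot |M^{S^1}|$. Thus $n \cdot |M^{S^1}|$ is even, and if $|M^{S^1}|$ is odd then $n$ must be even, i.e. $\dim M = 2n \equiv 0 \mod 4$.

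There is no genuine obstacle here: the argument is pure parity bookkeeping built on Proposition \ref{p23} or Lemma \ref{l26}. The only points deserving a word of care are that $\dim M$ is even to begin with, and, in the second route, that the double sum over $w$ is finite and totals exactly $n$ weights per fixed point.
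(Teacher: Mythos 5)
Your proof is correct and matches the paper's own derivation: the paper introduces Corollary \ref{c27} with the remark that, like Proposition \ref{p23}, Lemma \ref{l26} yields it, which is exactly your parity argument (summing $\sum_p N_p(w) \equiv 0 \bmod 2$ over the finitely many weights $w$ to get that $n\cdot|M^{S^1}|$ is even), and your first route via the contrapositive of Proposition \ref{p23} is the other derivation the paper alludes to. Your added observation that $\dim M$ must be even when an isolated fixed point exists is a worthwhile detail the paper leaves implicit.
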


If a circle action on a compact oriented manifold has two fixed points, the two fixed points have the same weights and have opposite signs; this easily follows from Theorem \ref{t22}.

\begin{theorem} \label{t28} \cite{Ko, J4}
Let the circle act on a compact oriented manifold with two fixed points $p$ and $q$. Then the weights at $p$ and $q$ are equal and $\epsilon(p)=-\epsilon(q)$. \end{theorem}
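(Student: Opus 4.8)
The plan is to use the Atiyah–Singer index theorem (Theorem \ref{t22}) to extract strong constraints on the two fixed points, and then argue that these constraints force the two weight multisets to coincide and the signs to be opposite. I would set up the localization formula for the signature, treating it as an identity of rational functions in the indeterminate $t$, and then exploit the behavior of this identity as $t \to 1$ and as $t \to 0$.

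First I would write out the signature formula explicitly for the two fixed points. If $p$ has sign $\epsilon(p)$ and weights $\{w_1, \ldots, w_n\}$, and $q$ has sign $\epsilon(q)$ and weights $\{v_1, \ldots, v_n\}$, then Theorem \ref{t22} gives
\begin{equation*}
\mathrm{sign}(M) = \epsilon(p) \prod_{i=1}^{n} \frac{1+t^{w_i}}{1-t^{w_i}} + \epsilon(q) \prod_{i=1}^{n} \frac{1+t^{v_i}}{1-t^{v_i}},
\end{equation*}
and the left side is a constant (independent of $t$). The key observation is that each factor $\frac{1+t^{w}}{1-t^{w}}$ has a simple pole at $t=1$ (since $1-t^w$ vanishes to first order there), so each product has a pole of order exactly $n$ at $t=1$. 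Since the sum must be a constant, the two polar parts at $t=1$ must cancel. Computing the leading Laurent coefficient at $t=1$: writing $t = 1+s$, each factor behaves like $\frac{2}{-ws}$ to leading order, so the product for $p$ contributes $\epsilon(p)\,(-1)^n \frac{2^n}{s^n \prod w_i}$, and similarly for $q$. Cancellation of the order-$n$ pole forces $\frac{\epsilon(p)}{\prod w_i} + \frac{\epsilon(q)}{\prod v_i} = 0$, which already shows $\epsilon(p) = -\epsilon(q)$ and $\prod w_i = \prod v_i$.

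Next I would leverage the full vanishing of \emph{all} the polar terms at $t=1$, not just the top one, together with the substitution $t \mapsto t^{-1}$. Note that $\frac{1+t^{-w}}{1-t^{-w}} = -\frac{1+t^{w}}{1-t^{w}}$, so replacing $t$ by $t^{-1}$ multiplies each product by $(-1)^n$; since $n$ is fixed and the sum is a constant, this gives a compatibility condition but no new data directly. The more productive route is to use $\epsilon(q) = -\epsilon(p)$ to rewrite the constant as
\begin{equation*}
\frac{\mathrm{sign}(M)}{\epsilon(p)} = \prod_{i=1}^{n} \frac{1+t^{w_i}}{1-t^{w_i}} - \prod_{i=1}^{n} \frac{1+t^{v_i}}{1-t^{v_i}},
\end{equation*}
and then argue that this difference being constant forces the two multisets to be equal. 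I would do this by clearing denominators and comparing the two rational functions: setting $f(t) = \prod (1+t^{w_i})\prod(1-t^{v_j})$ against $g(t) = \prod(1+t^{v_j})\prod(1-t^{w_i})$, the constancy condition becomes a polynomial identity that I would analyze via the orders of vanishing of the factors $1-t^{w}$ (which vanish precisely at roots of unity of order dividing $w$). Matching the multiplicities of the pole at each primitive root of unity $\zeta$ on both sides pins down the multiset $\{w_i\}$ against $\{v_j\}$.

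The main obstacle will be the last step: turning the single scalar constraint ``this rational function is constant'' into the full multiset equality $\{w_i\} = \{v_j\}$. The order-$n$ pole cancellation at $t=1$ is easy, but I expect the delicate part is ruling out exotic cancellations where the individual weights differ yet all the lower-order Laurent coefficients still conspire to cancel. My strategy to overcome this is to examine the pole structure at a primitive $d$-th root of unity $\zeta$ for each $d>1$: the factor $\frac{1+t^{w}}{1-t^{w}}$ has a pole at $\zeta$ exactly when $d \mid w$, so the order of the pole of the $p$-product at $\zeta$ equals $N_p := |\{i : d \mid w_i\}|$. Since the total sum is regular (constant) at $\zeta$, the leading polar coefficients from $p$ and $q$ must cancel, which forces $N_p = N_q$ for every $d$; this system of equalities over all $d$ is exactly enough to recover the full multiset of weights, completing the argument.
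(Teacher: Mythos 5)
Your opening steps are correct and are exactly the intended route (the paper gives no detailed proof of Theorem \ref{t28}, citing Kosniowski and remarking that it follows easily from Theorem \ref{t22}): the order-$n$ polar coefficient at $t=1$ gives $\epsilon(p)/\prod_i w_i + \epsilon(q)/\prod_i v_i = 0$, hence $\epsilon(p)=-\epsilon(q)$ and $\prod_i w_i = \prod_i v_i$, and evaluating at $t=0$ gives $\mathrm{sign}(M)=\epsilon(p)+\epsilon(q)=0$, so the two products $\prod_i \frac{1+t^{w_i}}{1-t^{w_i}}$ and $\prod_i \frac{1+t^{v_i}}{1-t^{v_i}}$ are in fact \emph{identically equal} as rational functions. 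You should state this last point explicitly, since your final step depends on it.

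The final step, however, has a genuine gap. You claim that the pole order of $\prod_i \frac{1+t^{w_i}}{1-t^{w_i}}$ at a primitive $d$-th root of unity $\zeta$ equals $N_p(d)=|\{i : d \mid w_i\}|$, so that regularity of the sum forces $N_p(d)=N_q(d)$. This is false for even $d$: any factor with $\zeta^{w_i}=-1$ (equivalently $d \mid 2w_i$ but $d \nmid w_i$) contributes a \emph{zero} at $\zeta$ that cancels poles. Concretely, take weights $\{1,1,2,2\}$ versus $\{1,1,1,4\}$ (these pass your $t=1$ test, both products being $4$) and $d=2$, $\zeta=-1$: one computes $\frac{(1+t)^2(1+t^2)^2}{(1-t)^2(1-t^2)^2}=\frac{(1+t^2)^2}{(1-t)^4}$ and $\frac{(1+t)^3(1+t^4)}{(1-t)^3(1-t^4)}=\frac{(1+t)^2(1+t^4)}{(1-t)^4(1+t^2)}$, and \emph{both} are regular at $t=-1$, so regularity there imposes no constraint at all, even though $N_p(2)=2\neq 1=N_q(2)$. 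Hence "regularity at $\zeta$ forces $N_p(d)=N_q(d)$" is unsound, and the Möbius-inversion finish never gets started; pole cancellation of a sum can never see the zeros, which is exactly the information lost. The repair is the polynomial comparison you sketch one paragraph earlier, executed with the correct multiplicity bookkeeping: since the two products are identically equal, clear denominators to get $\prod_i(1+t^{w_i})\prod_j(1-t^{v_j})=\prod_i(1+t^{v_i})\prod_j(1-t^{w_j})$, and compare multiplicities of each cyclotomic polynomial $\Phi_d$ using $1-t^{w}=\prod_{d\mid w}\Phi_d$ and $1+t^{w}=\prod_{d\mid 2w,\, d\nmid w}\Phi_d$. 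Writing $D(d)=N_p(d)-N_q(d)$, this yields $D(d)=0$ for odd $d$ and $D(d/2)=2D(d)$ for even $d$, whence $D\equiv 0$ for all $d$, and then Möbius inversion recovers the multiset equality as you intended.
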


If the multiset of weights at every fixed point are the same, the number of fixed points must be even and the signature of the manifold vanishes.

\begin{theorem} \label{t29} \cite{J1}
Let the circle act on a $2n$-dimensional compact oriented manifold $M$ with a discrete fixed point set. Suppose that the weights at each fixed point are $\{a_1,\cdots,a_n\}$ for some positive integers $a_1,\cdots,a_n$. Then the number of fixed points $p$ with $\epsilon(p)=+1$ and that with $\epsilon(p)=-1$ are equal. Moreover, the signature of $M$ vanishes.
\end{theorem}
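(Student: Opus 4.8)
The plan is to feed the uniform-weight hypothesis directly into the Atiyah--Singer index formula of Theorem \ref{t22} and then exploit the tension between the signature being a \emph{constant} and the fixed-point contribution being a \emph{non-constant} rational function of $t$. Concretely, since by hypothesis the multiset of weights at \emph{every} fixed point $p$ equals $\{a_1,\dots,a_n\}$, the factor $\prod_{i=1}^n \frac{1+t^{w_{p,i}}}{1-t^{w_{p,i}}}$ appearing in Theorem \ref{t22} is one and the same function of $t$ at each fixed point; write $P(t):=\prod_{i=1}^{n}\frac{1+t^{a_i}}{1-t^{a_i}}$. The index formula then collapses to
\[
\textrm{sign}(M)=\Big(\sum_{p\in M^{S^1}}\epsilon(p)\Big)\,P(t),
\]
whose left-hand side is an integer independent of $t$.

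The key observation I would make next is that $P(t)$ is not a constant. The cleanest way to see this is to let $t\to 1^-$ along real values in $(0,1)$: each factor $\frac{1+t^{a_i}}{1-t^{a_i}}$ has numerator tending to $2$ and positive denominator tending to $0$, so each factor, and hence $P(t)$, tends to $+\infty$. In particular $P$ is unbounded and therefore non-constant, and there is no cancellation of this blow-up since every factor diverges with the same sign. Consequently, if the integer $\sum_{p}\epsilon(p)$ were nonzero, the right-hand side of the displayed identity would be an unbounded (in particular non-constant) rational function of $t$, contradicting that it equals the constant $\textrm{sign}(M)$. Hence $\sum_{p\in M^{S^1}}\epsilon(p)=0$.

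This single conclusion yields both assertions at once. Because $\epsilon(p)\in\{+1,-1\}$, the relation $\sum_{p}\epsilon(p)=0$ says exactly that the number of fixed points with $\epsilon(p)=+1$ equals the number with $\epsilon(p)=-1$; and substituting $\sum_{p}\epsilon(p)=0$ back into the factored index formula gives $\textrm{sign}(M)=0\cdot P(t)=0$. I do not anticipate a serious obstacle here: the only point requiring a word of care is the claim that $P(t)$ is non-constant, which the $t\to 1^-$ limit settles immediately and uniformly in the $a_i$. (One could alternatively match power-series coefficients of both sides near $t=0$, but the limiting argument avoids any case analysis on the $a_i$.) Note also that this argument covers all dimensions $2n$, whereas Proposition \ref{p23} would only dispatch the case $\dim M\equiv 2\bmod 4$; the uniform-weight hypothesis is what makes the contribution factor out and forces the result in the remaining case $\dim M\equiv 0 \bmod 4$.
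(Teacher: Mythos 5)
Your proposal is correct and follows essentially the same route as the paper: both factor the common contribution $P(t)=\prod_{i=1}^{n}\frac{1+t^{a_i}}{1-t^{a_i}}$ out of the Atiyah--Singer formula of Theorem \ref{t22} and then play the non-constancy of $P(t)$ against the constancy of $\mathrm{sign}(M)$ to force $\sum_{p}\epsilon(p)=0$ and hence $\mathrm{sign}(M)=0$. The only difference is cosmetic: the paper certifies non-constancy by expanding each factor as the power series $1+2\sum_{j\geq 1}t^{ja_i}$ and comparing coefficients, while you use the limit $t\to 1^-$, which is equally valid since the identity of rational functions holds numerically on $(0,1)$.
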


\begin{proof}
By Theorem \ref{t22},
\begin{center}
$\displaystyle \textrm{sign}(M) = \sum_{p \in M^{S^1}} \epsilon(p) \prod_{i=1}^{n} \frac{1+t^{w_{p,i}}}{1-t^{w_{p,i}}} = \sum_{p \in M^{S^1}} \epsilon(p) \prod_{i=1}^{n} \frac{1+t^{a_i}}{1-t^{a_i}}=\sum_{p \in M^{S^1}} \epsilon(p) \prod_{i=1}^{n} [(1+t^{a_i})\sum_{j=0}^{\infty} t^{ja_i}]=\sum_{p \in M^{S^1}} \epsilon(p) \prod_{i=1}^{n} (1+2\sum_{j=0}^{\infty} t^{ja_i})$
\end{center}
for all indeterminate $t$. Since the signature of $M$ is a constant, this theorem follows. \end{proof}

We describe an equivariant connected sum of two oriented $S^1$-manifolds, $M$ and $N$, at fixed points. Unlike the standard approach, we perform equivariant connected sums at several fixed points $p_1, \dots, p_k$ of $M$ and $q_1, \dots, q_k$ of $N$, where, for each $i$, the fixed points $p_i$ and $q_i$ have the same multiset of weights and opposite signs.

Let $M$ and $N$ be two $2n$-dimensional connected oriented $S^1$-manifolds with discrete fixed point sets. Suppose that for $i=1,\cdots,k$, $p_i \in M^{S^1}$ and $q_i \in N^{S^1}$ satisfy $\epsilon_{M}(p_i)=-\epsilon_{N}(q_i)$ and $\{w_{p,1},\cdots,w_{p,n}\}=\{w_{q,1},\cdots,w_{q,n}\}$. For each $i$, there is an equivariant diffeomorphism $f_i$ ($g_i$) from a unit disk $D_{2n}$ in $\mathbb{C}^n$ to a neighborhood of $p_i$ ($q_i$), where the circle acts on $\mathbb{C}^n$ by
\begin{center}
$g \cdot (z_1,\cdots,z_n)=(g^{w_{p,1}}z_1,\cdots,g^{w_{p,n}}z_n)$
\end{center}
for all $g \in S^1 \subset \mathbb{C}$ and for all $(z_1,\cdots,z_n) \in \mathbb{C}^n$.

\begin{Definition}
The \textbf{equivariant connected sum} of $M$ and $N$ (at $p_1,\cdots,p_k \in M$ and $q_1,\cdots,q_k \in N$) is the quotient 
\begin{center}
$\displaystyle \{(M \setminus \cup_{i=1}^k f_i(0)\} \sqcup \{N \setminus \cup_{i=1}^k g_i(0)\}/\sim$,
\end{center}
where we identify $f_i(tu)$ with $g_i((1-t)u)$ for each $u \in \partial D_{2n}$ and each $0 < t < 1$, for each $i$.
\end{Definition}

If we take the equivariant connected sum of $M$ and $N$ at $p_i$ of $M$ and $q_i$ of $M$ for all $i$, because $\epsilon_M(p_i)=-\epsilon_N(q_i)$ for $1 \leq i \leq k$ and each gluing map reverses orientation, we get an oriented $S^1$-manifold $P$ with fixed points $(M^{S^1} \setminus \{p_1,\cdots,p_k\}) \sqcup (N^{S^1} \setminus \{q_1,\cdots,q_k\})$, which is also connected. Consequently, the fixed point data of $P$ is $(\Sigma_M \setminus \cup_{i=1}^k \Sigma_{p_i}) \sqcup (\Sigma_N \setminus \cup_{i=1}^k \Sigma_{q_i})$.

\begin{lem} \label{l212}
Let $M$ and $N$ be two $2n$-dimensional connected oriented $S^1$-manifolds with discrete fixed point sets. Suppose that for $i=1,\cdots,k$, $p_i \in M^{S^1}$ and $q_i \in N^{S^1}$ satisfy $\epsilon_{M}(p_i)=-\epsilon_{N}(q_i)$ and $\{w_{p,1},\cdots,w_{p,n}\}=\{w_{q,1},\cdots,w_{q,n}\}$. The equivariant connected sum of $M$ and $N$ at $p_1$, $\cdots$, $p_k$ and $q_1$, $\cdots$, $q_k$ is a $2n$-dimensional connected oriented $S^1$-manifold $P$ with a discrete fixed point set, whose fixed point set is $(M^{S^1} \setminus \{p_1,\cdots,p_k\}) \sqcup (N^{S^1} \setminus \{q_1,\cdots,q_k\})$ and fixed point data is $(\Sigma_M \setminus \cup_{i=1}^k \Sigma_{p_i}) \sqcup (\Sigma_N \setminus \cup_{i=1}^k \Sigma_{q_i})$. If $M$ and $N$ are compact, so is $P$. \end{lem}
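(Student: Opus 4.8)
The plan is to realize $P$ as two smooth oriented $S^1$-manifolds glued along open subsets via a single equivariant diffeomorphism, and then to read off each asserted property from this description. First I would record the gluing explicitly: writing points of the open punctured unit disk as $z=tu$ with $u\in\partial D_{2n}$ and $0<t<1$, the identification $f_i(tu)\sim g_i((1-t)u)$ is exactly the identification of the open sets $U_M^i:=f_i(\{0<|z|<1\})\subset M\setminus\{p_i\}$ and $U_N^i:=g_i(\{0<|z|<1\})\subset N\setminus\{q_i\}$ through the map $\Phi_i:=g_i\circ\phi\circ f_i^{-1}$, where $\phi(z)=(1-|z|)\,z/|z|$. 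Since $\phi$ is an involution of $\{0<|z|<1\}$ that is smooth away from the origin and $f_i,g_i$ are diffeomorphisms, each $\Phi_i$ is a diffeomorphism of open sets; hence $P$ is a smooth manifold (Hausdorffness follows because the two pieces are separated off the overlap). I would also verify that $\phi$ commutes with the $S^1$-action: the circle acts by unit complex scalars in each coordinate, so it preserves $|z|$ and acts linearly on $z/|z|$, whence $\phi(g\cdot z)=g\cdot\phi(z)$. As $f_i$ and $g_i$ are equivariant, the two actions agree on the overlaps and descend to an $S^1$-action on $P$.

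The orientation is the crux, and this is where the hypothesis $\epsilon_M(p_i)=-\epsilon_N(q_i)$ enters. I would orient $P$ by declaring the inclusion of $M\setminus\{p_i\}$ to be orientation-preserving; the question is whether this is compatible with the orientation of $N$ across each $\Phi_i$. In spherical coordinates $\phi$ sends $(r,u)\mapsto(1-r,u)$, reversing the radial direction and fixing the sphere factor, so $\phi$ is orientation-reversing. Equipping $D_{2n}\subset\mathbb{C}^n$ with the orientation of $L_{p,1}\oplus\cdots\oplus L_{p,n}$, the map $f_i$ is orientation-preserving precisely when $\epsilon_M(p_i)=+1$ and $g_i$ precisely when $\epsilon_N(q_i)=+1$. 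Pulling back the orientation of $N$ to the $f_i$-chart through $\Phi_i$ multiplies the standard orientation by $\epsilon_N(q_i)\cdot(-1)$, while the orientation coming from $M$ multiplies it by $\epsilon_M(p_i)$; these agree exactly when $\epsilon_M(p_i)=-\epsilon_N(q_i)$, which is our hypothesis. Thus $P$ carries a well-defined orientation restricting to that of $M$ on the $M$-part and of $N$ on the $N$-part.

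The remaining assertions are then immediate. Connectedness holds because removing the finitely many $p_i$ (resp. $q_i$) from the connected manifold $M$ (resp. $N$) of dimension $2n\ge2$ leaves a connected manifold, and the nonempty overlaps join the two pieces. For the fixed point set, the linear action on $\mathbb{C}^n$ with nonzero weights has $0$ as its only fixed point, so each $U_M^i,U_N^i$ is free of $S^1$-fixed points; hence $P^{S^1}=(M^{S^1}\setminus\{p_1,\dots,p_k\})\sqcup(N^{S^1}\setminus\{q_1,\dots,q_k\})$, which is discrete. Each surviving fixed point keeps an unchanged equivariant neighborhood, so its weights are unchanged, and since the orientation of $P$ restricts to that of $M$ (resp. $N$) there, its sign is unchanged; this gives $\Sigma_P=(\Sigma_M\setminus\bigcup_i\Sigma_{p_i})\sqcup(\Sigma_N\setminus\bigcup_i\Sigma_{q_i})$. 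Finally, compactness follows by covering $P$ with the images of the compact sets $M\setminus\bigcup_i f_i(\{|z|<\tfrac12\})$ and $N\setminus\bigcup_i g_i(\{|z|<\tfrac12\})$: the identification $\phi$ carries the region $\{|z|<\tfrac12\}$ of one chart into the region $\{|z|>\tfrac12\}$ of the other, so these two compact pieces already cover $P$. The main obstacle throughout is the orientation bookkeeping of the second paragraph, namely correctly combining the orientation-reversal of the radial inversion $\phi$ with the signs $\epsilon_M(p_i)$ and $\epsilon_N(q_i)$ to see that compatibility is equivalent to the stated sign hypothesis.
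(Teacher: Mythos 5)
Your proposal is correct and follows essentially the same route as the paper: the paper justifies this lemma only by the brief remark preceding it (each gluing map reverses orientation, so the hypothesis $\epsilon_M(p_i)=-\epsilon_N(q_i)$ makes the orientations match, and the fixed points away from the $p_i,q_i$ survive with unchanged data), and your write-up is a careful expansion of exactly that argument, with the orientation bookkeeping $\epsilon_M(p_i)\cdot(-1)\cdot\epsilon_N(q_i)=+1$ made explicit. The additional details you supply (equivariance of the radial flip, Hausdorffness, connectedness, and the two-compact-pieces covering argument) are all sound.
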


For a manifold $M$, let $\chi(M)$ denote the Euler number of $M$. Kobayashi proved that for a circle action on a compact manifold, its Euler number is equal to the sum of the Euler numbers of its fixed components.

\begin{theo} \cite{K} \label{t214}
Let the circle act on a compact oriented manifold $M$. Then \begin{center} $\displaystyle \chi(M)=\sum_{F \subset M^{S^1}} \chi(F)$, \end{center}
where the sum is taken over all fixed components of $M^{S^1}$.
\end{theo}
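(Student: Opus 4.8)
The plan is to deduce the identity from the Lefschetz fixed point theorem applied to a single generic element of the circle. First I would average an arbitrary Riemannian metric over $S^1$ to obtain an $S^1$-invariant metric, so that each $g \in S^1$ acts as an isometry and every fixed component $F$ of $M^{S^1}$ is a closed (totally geodesic) submanifold. Then I would choose $g = e^{2\pi i \theta} \in S^1$ with $\theta$ irrational, so that the cyclic subgroup generated by $g$ is dense in $S^1$; for this $g$, the fixed point set of the diffeomorphism $g \colon M \to M$ coincides with $M^{S^1}$, since by continuity of the action a point fixed by a dense subgroup is fixed by all of $S^1$.

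Next I would compute the Lefschetz number $L(g) = \sum_i (-1)^i \mathrm{tr}(g^* \mid H^i(M;\mathbb{Q}))$ in two ways. On the one hand, since $S^1$ is connected, $g$ is homotopic to the identity, so $g^*$ is the identity on $H^*(M;\mathbb{Q})$ and $L(g) = \sum_i (-1)^i \dim H^i(M;\mathbb{Q}) = \chi(M)$. On the other hand, I would apply the Lefschetz fixed point formula for a map whose fixed point set is a disjoint union of closed submanifolds (the clean, or nondegenerate, case), which reads $L(g) = \sum_F \chi(F)$, provided that along each $F$ the map $g$ is nondegenerate in the normal direction, i.e. $\mathrm{id} - dg$ is invertible on the normal bundle $N_F$. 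Comparing the two computations yields $\chi(M) = \sum_F \chi(F)$.

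The nondegeneracy hypothesis is exactly where the structure of the circle action enters. Over each fixed component $F$, the normal bundle $N_F$ splits $S^1$-equivariantly into real $2$-plane bundles on which $S^1$ rotates with a nonzero weight $w$; on such a summand $g$ acts by rotation through the angle $2\pi w\theta$. Since $\theta$ is irrational and $w \neq 0$, we have $w\theta \notin \mathbb{Z}$, so this rotation has no eigenvalue $1$ and $\mathrm{id} - dg$ is invertible on $N_F$, as required. Note that orientability of $M$ plays no role in this argument; it is carried over only as the standing hypothesis of the statement.

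The main obstacle is establishing (or justifying the use of) the clean-case Lefschetz formula $L(g) = \sum_F \chi(F)$, since here the fixed set is in general positive-dimensional rather than a finite collection of transverse fixed points. I would supply this either by invoking the standard generalization of the Lefschetz formula to clean fixed point manifolds, or, alternatively, by giving a self-contained argument through the fundamental vector field $X$ generated by the action: $X$ vanishes precisely on $M^{S^1}$, its linearization in the normal directions is the skew-symmetric rotation generator (whose determinant is positive, giving local normal index $+1$), so the Poincar\'e--Hopf theorem for vector fields with clean zero sets yields $\chi(M) = \sum_F \chi(F)$ directly.
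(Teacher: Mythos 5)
Your proof is correct, but there is nothing in the paper to compare it against: the paper states this result as a quoted theorem and cites Kobayashi's paper for it, giving no proof of its own. Your argument is essentially the classical one, and your Poincar\'e--Hopf alternative (applying the index theorem for vector fields with clean zero sets to the fundamental vector field of the action, whose zero set is exactly $M^{S^1}$) is in substance Kobayashi's original proof via Killing fields. One point of precision in your Lefschetz route: the clean fixed point formula in general carries signs, $L(g)=\sum_F \mathrm{sign}\bigl(\det(\mathrm{id}-dg|_{N_F})\bigr)\,\chi(F)$, so invertibility of $\mathrm{id}-dg$ on $N_F$ alone is not quite enough; you must also check that each sign is $+1$. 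This follows from the same rotation computation you use for invertibility: on a $2$-plane where $g$ acts by rotation through the angle $2\pi w\theta$ one has $\det(\mathrm{id}-R_{2\pi w\theta})=2-2\cos(2\pi w\theta)>0$, so the determinant on $N_F$ is a product of positive factors. You make exactly this positivity observation in the vector-field variant, so the needed fact is already in your write-up; it just has to be invoked in the Lefschetz variant as well. A second small remark: no global equivariant splitting of $N_F$ into $2$-plane bundles is required (such a splitting need not exist globally); the pointwise decomposition of each $N_pF$ into nontrivial irreducible $S^1$-representations suffices for both the invertibility and the sign. With these touches, both of your routes are complete proofs, and, as you note, orientability of $M$ plays no role in the statement.
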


The Euler number of a compact oriented surface of genus $g$ is $2-2g$ and the Euler number of a point is 1. Therefore, Theorem \ref{t214} has the following consequence.

\begin{lem} \label{l215} \cite{J1}
Let $M$ be a compact connected oriented surface of genus $g$.
\begin{enumerate}
\item If $g=0$, i.e., $M$ is the 2-sphere $S^2$, then any non-trivial circle action on it has two fixed points.
\item If $g=1$, i.e., $M$ is the 2-torus $\mathbb{T}^2$, then any non-trivial circle action on it is fixed-point-free.
\item If $g>1$, then $M$ does not admit a non-trivial circle action.
\end{enumerate}
\end{lem}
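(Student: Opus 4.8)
The plan is to derive the result purely from Kobayashi's formula (Theorem \ref{t214}) and the Euler number data recorded just above the statement. First I would pin down the structure of the fixed point set. For a non-trivial circle action on a connected surface, I claim every fixed component is an isolated point. Near any fixed point the action linearizes, so the relevant local model is a real $2$-dimensional representation of $S^1$; since the only $1$-dimensional real representation of $S^1$ is trivial, every non-trivial irreducible summand is $2$-dimensional, and hence each fixed component has even codimension in $M$. A codimension-$0$ component would exhaust the connected $M$ and force the action to be trivial, which is excluded; a codimension-$1$ (i.e.\ circle) component is impossible because its $1$-dimensional normal representation would be trivial, so a neighborhood of it would be fixed and again the action would be trivial. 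Thus $M^{S^1}$ is a finite set of isolated points, each contributing Euler number $1$.

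Next I would invoke Theorem \ref{t214}. Since each fixed component is a point with $\chi = 1$, the number of fixed points equals
\[
\sum_{F \subset M^{S^1}} \chi(F) = \chi(M) = 2 - 2g .
\]
This single identity drives all three conclusions.

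Finally I would read off the cases. For $g = 0$ the count is $2$, giving exactly two fixed points and proving part (1). For $g = 1$ the count is $0$, so the action is fixed point free, proving part (2). For $g > 1$ the quantity $2 - 2g$ is strictly negative, which cannot equal the number of fixed points (a non-negative integer); this contradiction shows no non-trivial circle action exists, proving part (3).

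The one genuinely delicate step is the first: confirming that a non-trivial action on a connected surface cannot have a $1$-dimensional (circle) fixed component, so that $\chi(M)$ literally counts isolated points. Once that reduction is in place, the remainder is a direct Euler-characteristic bookkeeping via Theorem \ref{t214}, and the three cases follow immediately from the sign and value of $2 - 2g$.
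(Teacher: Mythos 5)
Your proof is correct and follows essentially the same route as the paper: the lemma is exactly the consequence of Kobayashi's formula (Theorem \ref{t214}) together with $\chi(M)=2-2g$ and $\chi(\mathrm{point})=1$, which is precisely the derivation sketched in the paragraph preceding the statement. The only addition is your careful justification that fixed components of a non-trivial action must be isolated points (even codimension of fixed components), a step the paper leaves implicit, and which you handle correctly.
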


\section{$S^6$, $\mathbb{CP}^3$, and 6-dimensional analogue $Z_n$ of the Hirzebruch surfaces} \label{s3}

In this section, we describe $S^1$-actions on $S^6$, $\mathbb{CP}^3$, 6-dimensional analogue $Z_n$ of the Hirzebruch surfaces, and $Z_2 \sharp \overline{Z_2}$ (a connected sum at fixed points of two copies of $Z_2$.)

\begin{exa}[The 6-sphere $S^6$]\label{e1}
Let $a$, $b$, and $c$ be positive integers. Let the circle act on $S^6$ by
\begin{center}
$g \cdot (z_1,z_2,z_3,x)=(g^a z_1, g^b z_2, g^c z_3,x)$
\end{center}
for all $g \in S^1 \subset \mathbb{C}$ and for all $(z_1,z_2,z_3,x) \in S^6$, where 
\begin{center}
$S^6=\{(z_1,z_2,z_3,x) \in \mathbb{C}^3 \times \mathbb{R} \, : \, x^2+\sum_{i=1}^3 |z_i|^2=1\}$.
\end{center}
The action has two fixed points $q_1=(0,0,0,1)$ and $q_2=(0,0,0,-1)$. The weights at $q_i$ are $\{a,b,c\}$, and $\epsilon(q_1)=-\epsilon(q_2)=1$. The fixed point data of this action on $S^6$ is hence
\begin{center}
$\{+,a,b,c\}$, $\{-,a,b,c\}$.
\end{center}
\end{exa}

\begin{exa}[The complex projective space $\mathbb{CP}^3$]\label{e2}
Let $0<a<b<c$ be positive integers. Let the circle act on $\mathbb{CP}^3$ by
\begin{center}
$g \cdot [z_0:z_1:z_2:z_3]=[z_0:g^a z_1:g^b z_2:g^c z_3]$
\end{center}
for all $g \in S^1 \subset \mathbb{C}$ and for all $[z_0:z_1:z_2:z_3] \in \mathbb{CP}^3$. 
The action has 4 fixed points $q_1=[1:0:0:0]$, $q_2=[0:1:0:0]$, $q_3=[0:0:1:0]$, and $q_4=[0:0:0:1]$, that have weights $\{a,b,c\}$, $\{-a,b-a,c-a\}$, $\{-b,a-b,c-b\}$, and $\{-c,a-c,b-c\}$ as complex $S^1$-representations, respectively.
The fixed point data of $\mathbb{CP}^3$ is
\begin{center}
$\{+,a,b,c\}$, $\{-,a,b-a,c-a\}$, $\{+,b,b-a,c-b\}$, $\{-,c,c-a,c-b\}$.
\end{center}
\end{exa}

\begin{exa}[The manifold $Z_n$, 6-dimensional analogue of the Hirzebruch surfaces]\label{e3}

Fix an integer $n$. By the 6-dimensional analogue $Z_n$ of Hirzebruch surfaces we mean a compact complex manifold
\begin{center}
$Z_n=\{([z_0:z_1:z_2:z_3],[w_2:w_3]) \in \mathbb{CP}^3 \times \mathbb{CP}^1 \, : \, z_2 w_3^n=z_3 w_2^n\}$.
\end{center}
Let $a$, $b$, and $c$ be positive integers such that $b-a \neq 0$, $nc-a \neq 0$, and $nc-b \neq 0$. Let the circle act on $Z_n$ by
\begin{center}
$g \cdot ([z_0:z_1:z_2:z_3],[w_2:w_3])=([g^a z_0: g^b z_1: z_2: g^{nc} z_3],[w_2:g^c w_3])$
\end{center}
for all $g \in S^1 \subset \mathbb{C}$ and for all $([z_0:z_1:z_2:z_3],[w_2:w_3]) \in Z_n$. We denote by $Z_n(a,b,c)$ the manifold with this action.
The action has 6 fixed points; at each fixed point, we exhibit local coordinates and the weights at the fixed point as complex $S^1$-representations.
\begin{enumerate}
\item $q_1=([1:0:0:0],[1:0])$: local coordinates $(z_1/z_0, z_2/z_0, w_3/w_2)$, weights $\{b-a, -a, c\}$
\item $q_2=([1:0:0:0],[0:1])$: local coordinates $(z_1/z_0, z_3/z_0, w_2/w_3)$, weights $\{b-a, nc-a, -c\}$
\item $q_3=([0:1:0:0],[1:0])$: local coordinates $(z_0/z_1, z_2/z_1, w_3/w_2)$, weights $\{a-b, -b, c\}$
\item $q_4=([0:1:0:0],[0:1])$: local coordinates $(z_0/z_1, z_3/z_1, w_2/w_3)$, weights $\{a-b, nc-b, -c\}$
\item $q_5=([0:0:1:0],[1:0])$: local coordinates $(z_0/z_2, z_1/z_2, w_3/w_2)$, weights $\{a, b, c\}$
\item $q_6=([0:0:0:1],[0:1])$: local coordinates $(z_0/z_3, z_1/z_3, w_2/w_3)$, weights $\{a-nc, b-nc, -c\}$
\end{enumerate}
For instance, local coordinates of $q_2$ are $(z_1/z_0, z_3/z_0, w_2/w_3)$, and the circle acts near $q_2$ by
\begin{center}
$\displaystyle g \cdot \left( \frac{z_1}{z_0}, \frac{z_3}{z_0}, \frac{w_2}{w_3}\right)=\left(  \frac{g^b z_1}{g^a z_0}, \frac{g^{nc} z_3}{g^a z_0}, \frac{w_2}{g^c w_3}\right)=\left(g^{b-a} \frac{z_1}{z_0}, g^{nc-a} \frac{z_3}{z_0}, g^{-c} \frac{w_2}{w_3}\right)$.
\end{center}
Thus the complex $S^1$-weights at $q_2$ are $\{b-a,nc-a,-c\}$.
\end{exa}

To simplify the proof of Theorems \ref{t11} and \ref{t12}, in Example \ref{e3} we will take specific values of $a$, $b$, $c$, and $n$ and record it as a separate example.

\begin{exa}[The manifold $Z_1(a,b,c)$ with $a>b>c>0$]\label{e4}
Suppose that $a>b>c>0$. Take the manifold $Z_1(a,b,c)$ in Example \ref{e3}. The weights at the fixed points as complex $S^1$-representations are
\begin{center}
$\{b-a, -a, c\}$, $\{b-a, c-a, -c\}$, $\{a-b, -b, c\}$, $\{a-b, c-b, -c\}$, $\{a, b, c\}$, $\{a-c, b-c, -c\}$,
\end{center} 
respectively. Therefore, as real $S^1$-representations, the fixed point data of $Z_1(a,b,c)$ is
\begin{center}
$\{+, a-b, a, c\}$, $\{-, a-b, a-c, c\}$, $\{-, a-b, b, c\}$, $\{+, a-b, b-c, c\}$, $\{+, a, b, c\}$, $\{-, a-c, b-c, c\}$.
\end{center}
\end{exa}

\begin{exa}[The manifold $Z_1(a,b,c)$ with $a>c>b>0$]\label{e5}
Suppose that $a>c>b>0$. Take the manifold $Z_1(a,b,c)$ in Example \ref{e3}. The weights at the fixed points as complex $S^1$-representations are
\begin{center}
$\{b-a, -a, c\}$, $\{b-a, c-a, -c\}$, $\{a-b, -b, c\}$, $\{a-b, c-b, -c\}$, $\{a, b, c\}$, $\{a-c, b-c, -c\}$.
\end{center} 
As real $S^1$-representations, the fixed point data of $Z_1(a,b,c)$ is
\begin{center}
$\{+, a-b, a, c\}$, $\{-, a-b, a-c, c\}$, $\{-, a-b, b, c\}$, $\{-, a-b, c-b, c\}$, $\{+, a, b, c\}$, $\{+, a-c, c-b, c\}$.
\end{center}
\end{exa}

The only difference between Examples \ref{e4} and \ref{e5} is some sign issue on the fixed point data of $q_4$ and $q_6$.

\begin{exa}[The manifold $Z_2(a,d,d)$ with $0<2d<a$]\label{e6}
Let $a$ and $d$ be positive integers such that $2d<a$. In Example \ref{e3}, take $n=2$ and $b=c=d$. Complex $S^1$-weights at the fixed points are
\begin{center}
$\{d-a, -a, d\}$, $\{d-a, 2d-a, -d\}$, $\{a-d, -d, d\}$, $\{a-d, d, -d\}$, $\{a, d, d\}$, $\{a-2d, -d, -d\}$.
\end{center} 
As real $S^1$-representations, the fixed point data of $Z_2(a,d,d)$ is
\begin{center}
$\{+, a-d, a, d\}$, $\{-,a-d,a-2d,d\}$, $\{-,a-d, d, d\}$, $\{-,a-d, d, d\}$, $\{+, a, d, d\}$, $\{+,a-2d, d, d\}$.
\end{center}
\end{exa}

\begin{exa}[The manifold $Z_2(a,d,d)$ with $2d>a>0$]\label{e7}
Let $a$ and $d$ be positive integers such that $2d>a$. In Example \ref{e3}, take $n=2$ and $b=c=d$. Complex $S^1$-weights at the fixed points are
\begin{center}
$\{d-a, -a, d\}$, $\{d-a, 2d-a, -d\}$, $\{a-d, -d, d\}$, $\{a-d, d, -d\}$, $\{a, d, d\}$, $\{a-2d, -d, -d\}$.
\end{center} 
As real $S^1$-representations, the fixed point data of $Z_2(a,d,d)$ is
\begin{center}
$\{+, a-d, a, d\}$, $\{+,a-d,2d-a,d\}$, $\{-,a-d, d, d\}$, $\{-,a-d, d, d\}$, $\{+, a, d, d\}$, $\{-,2d-a, d, d\}$.
\end{center}
\end{exa}

As for Examples \ref{e4} and \ref{e5}, the only difference between Examples \ref{e6} and \ref{e7} is some sign issue on the fixed point data of $q_2$ and $q_6$.

\begin{exa}[The manifold $Z_2(a,e,e) \sharp \overline{Z_2}(a,a-e,a-e)$]\label{e8}
Let $a$ and $e$ be positive integers such that $2e<a$. Take $Z_2(a,e,e)$ of Example \ref{e6} that has fixed point data
\begin{center}
$\{+, a-e, a, e\}$, $\{-,a-e,a-2e,e\}$, $\{-,a-e, e, e\}$, $\{-,a-e, e, e\}$, $\{+, a, e, e\}$, $\{+,a-2e, e, e\}$.
\end{center}
Denote the fixed points by $q_1',\cdots,q_6'$, respectively.
Since $2e<a$, it follows $a<2(a-e)$. We take $Z_2(a,a-e,a-e)$ (take $d=a-e$) of Example \ref{e7} that has fixed point data
\begin{center}
$\{+, e, a, a-e\}$, $\{+, e, a-2e, a-e\}$, $\{-, e, a-e, a-e\}$, $\{-, e, a-e, a-e\}$, $\{+, a, a-e, a-e\}$, $\{-, a-2e, a-e, a-e\}$.
\end{center}
We reverse the orientation of $Z_2(a,a-e,a-e)$ to get a manifold $\overline{Z_2}(a,a-e,a-e)$ that has fixed point data
\begin{center}
$\{-, e, a, a-e\}$, $\{-, e, a-2e, a-e\}$, $\{+,e,a-e,a-e\}$, $\{+,e,a-e,a-e\}$, $\{-, a, a-e, a-e\}$, $\{+, a-2e, a-e, a-e\}$.
\end{center}
Denote the fixed points by $q_1'',\cdots,q_6''$, respectively.
Now, $q_1'$ and $q_1''$ have the same weights (as real $S^1$-representations) and satisfy $\epsilon(q_1')=-\epsilon(q_1'')$. Therefore, we can take an equivariant connected sum at $q_1'$ of $Z_2(a,e,e)$ and at $q_1''$ of $\overline{Z_2}(a,a-e,a-e)$ to construct another 6-dimensional compact connected oriented $S^1$-manifold $Z_2(a,e,e) \sharp \overline{Z_2}(a,a-e,a-e)$ with 10 fixed points $\hat{q}_1$, $\cdots$, $\hat{q}_{10}$, that has fixed point data
\begin{center}
$\{-,a-e,a-2e,e\}$, $\{-,a-e, e, e\}$, $\{-,a-e, e, e\}$, $\{+, a, e, e\}$, $\{+,a-2e, e, e\}$, $\{-, e, a-2e, a-e\}$, $\{+,e,a-e,a-e\}$, $\{+,e,a-e,a-e\}$, $\{-, a, a-e, a-e\}$, $\{+, a-2e, a-e, a-e\}$.
\end{center}
\end{exa}

\section{Relation between fixed point datum of fixed points in isotropy submanifold} \label{s4}

To prove Theorems \ref{t11} and \ref{t12} for a 6-dimensional oriented $S^1$-manifold with isolated fixed points, we require some technical lemmas concerning the relationships between the fixed point data of fixed points lying in the same component of an isotropy submanifold $M^{\mathbb{Z}_l}$, where $l$ is the largest weight. To establish these lemmas, we need to introduce additional terminology.

Let the circle act effectively on a $2n$-dimensional compact oriented manifold $M$ with a discrete fixed point set. Let $w$ be a positive integer. Let $F$ be a component of $M^{\mathbb{Z}_w}$ such that $F \cap M^{S^1} \neq \emptyset$. 
Assume that $F$ is orientable, and choose orientations of $F$ and its normal bundle $NF$ so that the induced orientation on $TF \oplus NF$ agrees with the orientation of $M$. Let $p \in F \cap M^{S^1}$ be an $S^1$-fixed point. By permuting the $L_{p,i}$'s, we may write
\begin{center}
$T_pM=L_{p,1} \oplus \cdots \oplus L_{p,m} \oplus L_{p,m+1} \oplus \cdots \oplus L_{p,n}$,
\end{center}
where $T_pF=L_{p,1} \oplus \cdots \oplus L_{p,m}$ and $N_pF=L_{p,m+1} \oplus \cdots \oplus L_{p,n}$
The circle acts on each $L_{p,i}$ with weight $w_{p,i}$. As before, we orient each $L_{p,i}$ so that $w_{p,i}$ is positive.
\begin{Definition} \label{d41}
\begin{enumerate}[(1)]
\item $\epsilon_F(p) = +1$ if the orientation on $F$ agrees with the orientation on $L_{p,1} \oplus \cdots \oplus L_{p,m}$, and $\epsilon_F(p) = -1$ otherwise.
\item $\epsilon_N(p) = +1$ if the orientation on $NF$ agrees with the orientation on $L_{p,m+1} \oplus \cdots \oplus L_{p,n}$, and $\epsilon_N(p) = -1$ otherwise.
\end{enumerate}
\end{Definition}
By definition, $\epsilon(p) = \epsilon_F(p) \cdot \epsilon_N(p)$.

Now suppose that $\dim M = 6$ and that the largest weight $l$ is greater than 1. Assume that $M^{\mathbb{Z}_l}$ has a 2-dimensional component $F$ containing an $S^1$-fixed point $q$. Then $F$ is a 2-sphere and contains another fixed point $q'$. A relationship between the fixed point data of $q$ and $q'$ is as follows.

\begin{lemma} \label{l42}
Let the circle act effectively on a 6-dimensional compact oriented manifold $M$ with a discrete fixed point set. Suppose that the largest weight
$$l=\max\{w_{p,i} \, | \, 1 \leq i \leq n, p \in M^{S^1}\}$$
is greater than 1. Suppose there is a fixed point $q$ with weights $\{l, a, b\}$ for some positive integers $a$ and $b$ such that $a,b < l$. Then there exists another fixed point $q'$ such that one of the following holds:
\begin{enumerate}
\item $\epsilon(q') = -\epsilon(q)$ and the weights at $q'$ are $\{l, a, b\}$.
\item $\epsilon(q') = -\epsilon(q)$ and the weights at $q'$ are $\{l, l - a, l - b\}$.
\item $\epsilon(q') = \epsilon(q)$ and the weights at $q'$ are $\{l, a, l - b\}$.
\item $\epsilon(q') = \epsilon(q)$ and the weights at $q'$ are $\{l, l - a, b\}$.
\end{enumerate}
The fixed points $q$ and $q'$ lie in the same component of $M^{\mathbb{Z}_l}$, which is the 2-sphere.
\end{lemma}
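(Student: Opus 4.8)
The plan is to identify the component $F$ and the second fixed point first, and then to extract the weights and signs at $q'$ from the equivariant geometry of the normal bundle $NF$ over $F$. Since the weights at $q$ are $\{l,a,b\}$ with $0<a,b<l$, the only weight divisible by $l$ is $l$ itself, so $T_qF=(T_qM)^{\mathbb{Z}_l}=L_{q,l}$ is $2$-dimensional. Hence the component $F$ of $M^{\mathbb{Z}_l}$ through $q$ is a $2$-dimensional compact submanifold, orientable by Lemma \ref{l25}, and the restricted circle action on it is nontrivial (its tangent weight at $q$ is $l\neq 0$) with a fixed point. By the genus analysis of Lemma \ref{l215}, $F$ is the $2$-sphere with exactly two fixed points $q$ and $q'$; the tangent weight to $F$ at $q'$ again has magnitude $l$, and, exactly as for the two poles of a rotation of $S^2$ (compare Theorem \ref{t28}), one has $\epsilon_F(q)=-\epsilon_F(q')$.

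Next I would analyze $NF$, an $S^1$-equivariant real rank-$4$ bundle over $F\cong S^2$. Because $\mathbb{Z}_l$ acts trivially on the base $F$, the bundle splits into $\mathbb{Z}_l$-isotypic pieces, and splitting these further over $S^2$ gives an $S^1$-equivariant decomposition $NF\cong\mathcal{L}_1\oplus\mathcal{L}_2$ into complex line bundles, with complex structures chosen so that $\mathcal{L}_1|_q$ and $\mathcal{L}_2|_q$ carry the positive weights $a$ and $b$. Since the $\mathbb{Z}_l$-representation type of each $\mathcal{L}_i$ is constant over the connected base $F$, the weight of $\mathcal{L}_i$ at $q'$ is congruent modulo $l$ to its weight at $q$; hence I may write the complex weights of $\mathcal{L}_1|_{q'}$ and $\mathcal{L}_2|_{q'}$ as $a-le_1$ and $b-le_2$ for some integers $e_1,e_2$ (equivalently $e_i=c_1(\mathcal{L}_i)[S^2]$ via the pole-to-pole weight difference formula over $S^2$).

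The decisive step is that the maximality of $l$ forces $e_1,e_2\in\{0,1\}$: since $0<a<l$, the magnitude $|a-le_1|$ is at most $l$ only for $e_1\in\{0,1\}$ (yielding weights $a$ and $l-a$ at $q'$), whereas $e_1\le-1$ or $e_1\ge 2$ would produce a weight exceeding $l$; the same holds for $b$. The four choices $(e_1,e_2)\in\{0,1\}^2$ then recover precisely the four weight multisets $\{l,a,b\}$, $\{l,l-a,l-b\}$, $\{l,a,l-b\}$, $\{l,l-a,b\}$ in the statement. For the signs I would orient $NF$ by the chosen complex structure of $\mathcal{L}_1\oplus\mathcal{L}_2$, so that $\epsilon_N(q)=+1$, while at $q'$ each summand with $e_i=1$ has acquired a negative weight and must be reoriented to make its weight positive, each reorientation costing a factor $-1$; hence $\epsilon_N(q')=(-1)^{e_1+e_2}$. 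Combining this with $\epsilon_F(q')=-\epsilon_F(q)$ and $\epsilon(p)=\epsilon_F(p)\epsilon_N(p)$ gives $\epsilon(q')=-(-1)^{e_1+e_2}\epsilon(q)$, which equals $-\epsilon(q)$ in Cases (1)--(2), where $e_1+e_2$ is even, and $+\epsilon(q)$ in Cases (3)--(4), where $e_1+e_2$ is odd, matching the four cases. I expect the main obstacle to be the equivariant splitting of $NF$ over $S^2$ together with the consistent bookkeeping of orientations for $\epsilon_N$; once these are in place, the restriction to exactly four cases is precisely the content of the maximality of $l$.
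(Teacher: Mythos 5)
Your route is genuinely different from the paper's, and its second half is correct: granting an $S^1$-equivariant splitting $NF\cong\mathcal{L}_1\oplus\mathcal{L}_2$ into complex line bundles, the congruence of weights modulo $l$ along each $\mathcal{L}_i$ together with the maximality of $l$ does force $e_1,e_2\in\{0,1\}$, and your sign bookkeeping $\epsilon(q')=-(-1)^{e_1+e_2}\epsilon(q)$ reproduces exactly the four cases. By contrast, the paper never splits $NF$ globally: it only uses that $NF$ is an oriented bundle of $\mathbb{Z}_l$-representations over the connected base $F$, so that the two fibers $N_qF$ and $N_{q'}F$ are isomorphic as $\mathbb{Z}_l$-representations, and then it runs the same $\epsilon=\epsilon_F\cdot\epsilon_N$ bookkeeping fiberwise, the four cases arising according to whether a splitting-preserving isomorphism of the two fibers preserves or reverses orientation on each $2$-dimensional factor. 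The identification of $F$ (via Lemmas \ref{l25} and \ref{l215} and Theorem \ref{t28}) is the same in both arguments.

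The gap is in your pivotal first step. The assertion that the $\mathbb{Z}_l$-isotypic decomposition of $NF$ can be ``split further over $S^2$'' into $S^1$-equivariant complex line bundles is precisely the hard content, and it does not follow from isotypicality in the degenerate cases $a\equiv b$ or $a\equiv -b \pmod{l}$: there the isotypic decomposition consists of a \emph{single} real rank-$4$ piece, so it provides no splitting at all, and decomposing an $S^1$-equivariant rank-$2$ complex bundle over $S^2$ into equivariant line bundles is a nontrivial theorem (an equivariant clutching or Birkhoff--Grothendieck statement) that you would need to prove or cite. Moreover, when $l=2$, or more generally for an isotypic piece on which $\mathbb{Z}_l$ acts by $-1$ (weight $l/2$), the $\mathbb{Z}_l$-action furnishes no complex structure, so even the complex-linear setup requires a separate device such as an invariant metric plus an orientation. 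These degenerate weights are not exotic: fixed points with weights $\{l,x,x\}$ and partners with weights $\{l,l-x,l-x\}$ or $\{l,x,l-x\}$ are exactly what occur in Cases (2-b-ii) and (2-c-ii) of the proof of Theorems \ref{t11} and \ref{t12}, so the lemma is needed there in full strength. The paper's fiberwise representation-theoretic comparison avoids this issue entirely, which is what makes its proof elementary; your approach is salvageable, but only after supplying the equivariant splitting theorem (or treating the degenerate cases by a separate argument).
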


\begin{proof}
Let $F$ be the component of $M^{\mathbb{Z}_l}$ that contains $q$, which is a lower-dimensional closed submanifold of $M$. Since $q$ has only one weight divisible b $l$, we have $\dim F=2$. By Lemma~\ref{dim2}, $F$ is orientable. Choose an orientation of $F$, and an orientation of $NF$, so that the induced orientation on $TF \oplus NF$ agrees with the orientation of $M$. The circle action on $M$ restricts to an $S^1$-action on $F$, under which $q$ is a fixed point. By Lemma~\ref{l215}, $F$ is a 2-sphere and has another fixed point, $q'$. Applying Theorem \ref{t28} to the induced action on $F$, we have $\epsilon_F(q)=-\epsilon_F(q')$. Since $NF$ is an oriented $\mathbb{Z}_w$-bundle over $F$, and $F$ is connected, the $\mathbb{Z}_w$-representations of $N_{q}F$ and $N_{q'}F$ are isomorphic. 

Let $N_qF=L_{q,2} \oplus L_{q,3}$, where the circle acts on $L_{q,2}$ with weight $a$ and on $L_{q,3}$ with weight $b$. Similarly, let $N_{q'}F=L_{q',2} \oplus L_{q',3}$, where the circle acts on $L_{q',2}$ with weight $c$ and on $L_{q',3}$ with weight $d$, for some positive integers $c$ and $d$. Note that $c,d<l$.

First, suppose that $\epsilon(q)=\epsilon(q')$. Since $\epsilon_F(q)=-\epsilon_F(q')$, with $\epsilon(q)=\epsilon_F(q) \cdot \epsilon_N(q)$ and $\epsilon(q')=\epsilon_F(q') \cdot \epsilon_N(q')$ this implies that $\epsilon_N(q)=-\epsilon_N(q')$. Therefore, there is an orientation reversing isomorphism $\phi$ from $L_{q,2} \oplus L_{q,3}$ to $L_{q',2} \oplus L_{q',3}$ as $\mathbb{Z}_l$-representations. Without loss of generality, by permuting $L_{q',2}$ and $L_{q',3}$ if necessary, we may assume that this isomorphism takes $L_{q,2}$ to $L_{q',2}$ and $L_{q,3}$ to $L_{q',3}$. Then one of the following holds.
\begin{enumerate}[(a)]
\item The isomorphism $\phi$ is orientation preserving from $L_{q,2}$ to $L_{q',2}$, and orientation reversing from $L_{q,3}$ to $L_{q',3}$.
\item The isomorphism $\phi$ is orientation reversing from $L_{q,2}$ to $L_{q',2}$, and orientation preserving from $L_{q,3}$ to $L_{q',3}$.
\end{enumerate}

Assume that Case (a) holds. Since $\phi$ is an orientation preserving isomorphism from $L_{q,2}$ to $L_{q',2}$ as $\mathbb{Z}_l$-representations, this implies that $a \equiv c \mod l$. Since $a,c<l$, it follows that $a=c$. Next, since $\phi$ is an orientation reserving isomorphism from $L_{q,3}$ to $L_{q',3}$ as $\mathbb{Z}_l$-representations, this implies that $b \equiv -d \mod l$. With $b,d<l$ this means that $d=l-b$. This is Case (3) of this lemma. Similarly, if Case (b) holds, then $c=l-a$ and $b=d$; this is Case (4) of this lemma.

Second, suppose that $\epsilon(q)=-\epsilon(q')$. Since $\epsilon_F(q)=-\epsilon_F(q')$, it follows that $\epsilon_N(q)=\epsilon_N(q')$. Hence there is an orientation preserving isomorphism $\phi$ from $L_{q,2} \oplus L_{q,3}$ to $L_{q',2} \oplus L_{q',3}$ as $\mathbb{Z}_l$-representations. We may assume that this isomorphism takes $L_{q,2}$ to $L_{q',2}$ and $L_{q,3}$ to $L_{q',3}$. There are two possibilities.
\begin{enumerate}[(i)]
\item The isomorphism $\phi$ is orientation preserving from $L_{q,i}$ to $L_{q',i}$ for both $i \in \{2,3\}$.
\item The isomorphism $\phi$ is orientation reversing from $L_{q,i}$ to $L_{q',i}$ for both $i \in \{2,3\}$.
\end{enumerate}
Case (i) means that $a=c$ and $b=d$; this is Case (1) of this lemma. Case (ii) means that $c=l-a$ and $d=l-b$; this is Case (2) of this lemma. 
\end{proof}

Suppose now that $M^{\mathbb{Z}_l}$ has a 4-dimensional component that contains an $S^1$-fixed point $q$. Equivalently, suppose that there is a fixed point $q$ that has weight $l$ twice. Then there must exist another fixed point $q'$ that has the same weights as $q$, and has the opposite sign; $\epsilon(q')=-\epsilon(q)$.

\begin{lemma} \label{l43}
Let the circle act effectively on a 6-dimensional compact oriented manifold $M$ with a discrete fixed point set. Suppose that the largest weight 
$$l=\max\{w_{p,i} \, | \, 1 \leq i \leq 3, p \in M^{S^1}\}$$
is greater than 2. 
\begin{enumerate}[(1)]
\item Suppose that there is a fixed point $q$ with weights $\{l,l,a\}$ for some positive integer $a$. Then there exists another fixed point $q'$ with weights $\{l,l,a\}$ such that $\epsilon(q')=-\epsilon(q)$. Moreover, $q$ and $q'$ are in the same component of $M^{\mathbb{Z}_l}$.
\item For each positive integer $a$, the number of fixed points with fixed point data $\{+,l,l,a\}$ is equal to the number of fixed points with fixed point data $\{-,l,l,a\}$.
\end{enumerate}
\end{lemma}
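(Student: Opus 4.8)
The plan is to work entirely inside the component $F$ of the isotropy submanifold $M^{\mathbb{Z}_l}$ that contains $q$, in the same spirit as the $2$-sphere case of Lemma \ref{l42}, but now with $\dim F=4$. Since $q$ has two weights equal to $l$, its $\mathbb{Z}_l$-fixed tangent directions form a $4$-dimensional space, so $F$ is a $4$-dimensional closed submanifold; it is $S^1$-invariant (as $\mathbb{Z}_l\subset S^1$ is normal) and orientable by Lemma \ref{l25}. First I would fix orientations of $F$ and $NF$ compatibly with $M$ as in Definition \ref{d41}, so that $\epsilon(x)=\epsilon_F(x)\epsilon_N(x)$ at every $x\in F^{S^1}=F\cap M^{S^1}$. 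The key normalization is that, because $l$ is the global maximum and all weights are positive, every tangent weight of the restricted action on $F$ is a positive multiple of $l$ that is at most $l$, hence equals $l$; thus every fixed point of the $S^1$-action on $F$ has $F$-weights $\{l,l\}$, and its third ($M$-normal) weight $w_x$ satisfies $0<w_x<l$.

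The representation theory enters through the normal bundle. Averaging an invariant metric gives an $S^1$-invariant complex structure $J$ on the oriented rank-$2$ bundle $NF$, so $(NF,J)$ is an $S^1$-equivariant complex line bundle over the connected base $F$; its $\mathbb{Z}_l$-weight is therefore a single residue, constant over $F$. Reading this residue off at $q$ forces $w_x\equiv\pm a\pmod l$, and with $0<w_x<l$ this leaves only $w_x\in\{a,\,l-a\}$. I would then record the two resulting types of fixed points of $F$: type $A$ with $M$-weights $\{l,l,a\}$ and type $B$ with $M$-weights $\{l,l,l-a\}$. Constancy of the complex $\mathbb{Z}_l$-weight also pins down the $S^1$-weight of $J$ at each fixed point, so $\epsilon_N$ is constant on type $A$ and constant on type $B$, taking opposite values on the two types; this is the $4$-dimensional analogue of the sign bookkeeping in Lemma \ref{l42}.

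With this in hand, the counting comes from two identities on $F$. Applying the localization Theorem \ref{t21} to $F$ with $\alpha=e_{S^1}(NF)$, whose degree $2$ is below $\dim F=4$, the left side vanishes; since $\alpha|_x=\epsilon_N(x)\,w_x\,t$ while $e_{S^1}(T_xF)=\epsilon_F(x)\,l^2t^2$, the fixed-point formula reduces to $\sum_{x\in F^{S^1}}\epsilon(x)\,w_x=0$. Applying Theorem \ref{t29} to $F$, whose fixed points all carry the same weight multiset $\{l,l\}$, gives $\sum_{x}\epsilon_F(x)=0$, which in terms of types reads $\sum_{A}\epsilon(x)=\sum_{B}\epsilon(x)$. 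Substituting $w_x=a$ on $A$ and $w_x=l-a$ on $B$ into the localization identity and using this equality yields $l\sum_{A}\epsilon(x)=0$, hence $\sum_{A}\epsilon(x)=\sum_{B}\epsilon(x)=0$. In particular the type-$A$ fixed points of $F$, which all have weights $\{l,l,a\}$, are sign-balanced; since $q$ is one of them, there is a second one $q'$ with $\epsilon(q')=-\epsilon(q)$ in the same component, proving (1). (When $2a=l$ the two types coincide and the localization identity alone already gives $\sum_{F^{S^1}}\epsilon(x)=0$.)

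Part (2) then follows by running this argument component by component: every fixed point whose weights are $\{l,l,a\}$ lies in a unique $4$-dimensional component of $M^{\mathbb{Z}_l}$ and is of type $A$ there, so summing the balanced type-$A$ counts over all such components gives the global equality of the numbers of $\{+,l,l,a\}$ and $\{-,l,l,a\}$ fixed points. The main obstacle, and the reason a single localization identity does not suffice, is precisely the possible appearance of the mirror weight $l-a$ at some fixed points of $F$: one must disentangle types $A$ and $B$, and this is exactly what forces the combined use of the localization formula of Theorem \ref{t21} and the signature identity of Theorem \ref{t29}.
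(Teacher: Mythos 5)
Your proposal is correct and takes essentially the same route as the paper's proof: restrict to the $4$-dimensional component $F$ of $M^{\mathbb{Z}_l}$, note all tangent weights of $F$ equal $l$, use constancy of the oriented $\mathbb{Z}_l$-representation on $NF$ over the connected base to force normal weights into $\{a,l-a\}$ with correlated signs $\epsilon_N$, apply Theorem \ref{t29} to the induced action on $F$, and kill $\int_F e_{S^1}(NF)$ by degree reasons via Theorem \ref{t21}. The differences are only presentational: you justify the constancy via an averaged invariant complex structure and finish with the two linear identities $\sum_{A}\epsilon(x)=\sum_{B}\epsilon(x)$ and $a\sum_{A}\epsilon(x)+(l-a)\sum_{B}\epsilon(x)=0$, whereas the paper extracts the same conclusion as $s=t$ directly from the localization sum.
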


\begin{proof}
Suppose that a fixed point $p_1:=q$ has weights $\{l,l,a\}$ for some positive integer $a$. Since the action is effective and $l>2$ is the largest weight, $a$ is strictly smaller than $l$. Let $F$ be the component of $M^{\mathbb{Z}_l}$ that contains $p_1$. Since the multiplicity of the weight $l$ at $p_1$ is 2, we have $\dim F=4$. 
By Lemma \ref{codim2}, $F$ is orientable. Choose an orientation of $F$ so that $\epsilon_F(p_1)=+1$, for simplicity of the proof. Also choose an orientation of $NF$ so that the induced orientation on $TF \oplus NF$ agrees with the orientation of $M$. The circle action on $M$ restricts to a circle action on $F$, and the fixed point set $F^{S^1}$ of this action on $F$ is equal to $F \cap M^{S^1}$, so it is non-empty and finite. For any $p \in F^{S^1}$, the weights in $T_pF$ are multiples of $l$, and hence they are all equal to $l$, since $l$ is the largest weight. Applying Theorem \ref{t29} to the induced $S^1$-action on $F$, with $a_1=a_2=l$, the number of fixed points $p \in F^{S^1}$ with $\epsilon_F(p)=+1$ equals the number of fixed points $p \in F^{S^1}$ with $\epsilon_F(p)=-1$. Let $p_1,\cdots,p_k \in F^{S^1}$ be the fixed points with $\epsilon_F(p_i)=+1$, and let $q_1,\cdots,q_k \in F^{S^1}$ be the fixed points with $\epsilon_F(q_i)=-1$. 

By permuting $p_i$'s if necessary, let $\epsilon(p_1)=\cdots=\epsilon(p_s)$ and $\epsilon(p_{s+1})=\cdots=\epsilon(p_k)=-\epsilon(p_1)$. Similarly, by permuting $q_j$'s if necessary, let $\epsilon(q_1)=\cdots=\epsilon(q_t)=-\epsilon(p_1)$ and $\epsilon(q_{t+1})=\cdots=\epsilon(q_k)=\epsilon(p_1)$.

Let $\{l,l,a_i\}$ be the weights at $p_i$ and let $\{l,l,b_j\}$ be the weights at $q_j$. Then $a_i,b_j<l$. Since $NF$ is an oriented $\mathbb{Z}_w$-bundle over $F$ and $F$ is connected, the $\mathbb{Z}_w$-representations of $N_{p}F$ and $N_{p'}F$ are isomorphic for any two fixed points $p$ and $p'$ in $F^{S^1}$.

Consider $p_i$ for $1 \leq i \leq s$ ($s+1 \leq i \leq k$.) Since $\epsilon(p_i)=\epsilon(p_1)$ ($\epsilon(p_i)=-\epsilon(p_1)$) and $\epsilon_F(p_i)=\epsilon_F(p_1)$ ($\epsilon_F(p_i)=\epsilon_F(p_1)$), it follows that $\epsilon_N(p_i)=\epsilon_N(p_1)$ ($\epsilon_N(p_i)=-\epsilon_N(p_1)$.) Thus, there is an orientation preserving (reversing) isomorphism from the representation $N_{p_1}F$ with weight $a$ to that $N_{p_i}F$ with weight $a_i$ as $\mathbb{Z}_l$-representations. This implies that $a \equiv a_i \mod l$ ($a \equiv -a_i \mod l$) and hence $a=a_i$ ($a_i=l-a$, respectively.)

Similarly, for $q_j$ with $1 \leq j \leq t$ ($t+1 \leq j \leq k$), $\epsilon(q_j)=-\epsilon(p_1)$ ($\epsilon(q_j)=\epsilon(p_1)$) and $\epsilon_F(q_j)=-\epsilon_F(p_1)$ imply that $\epsilon_N(q_j)=\epsilon_N(p_1)$ ($\epsilon_N(q_j)=-\epsilon_N(p_1)$) and hence there is an orientation preserving (reversing) isomorphism from $N_{p_1}F$ with weight $a$ to $N_{q_j}F$ with weight $b_j$ as $\mathbb{Z}_l$-representations, and thus $a \equiv b_j \mod l$ ($a \equiv -b_j \mod l$), i.e., $b_j=a$ ($b_j=l-a$, respectively.)

Let $e_{S^1}(NF)$ denote the equivariant Euler class of the normal bundle to $F$ in $M$. Let $u$ be a degree two generator of $H^*(\mathbb{CP}^{\infty};\mathbb{Z})$. The restriction of $e_{S^1}(NF)$ at $p_i$ is 
\begin{enumerate}
\item $e_{S^1}(NF)(p_i)=\epsilon_N(p_i) \cdot au=\epsilon_N(p_1) \cdot au$ if $1 \leq i \leq s$.
\item $e_{S^1}(NF)(p_i)=\epsilon_N(p_i) \cdot (l-a)u=-\epsilon_N(p_1) \cdot (l-a)u$ if $s+1 \leq i \leq k$.
\end{enumerate}
Similarly, the restriction of $e_{S^1}(NF)$ at $q_j$ is
\begin{enumerate}
\item $e_{S^1}(NF)(q_j)=\epsilon_N(q_j) \cdot au=\epsilon_N(p_1) \cdot au$ if $1 \leq j \leq t$.
\item $e_{S^1}(NF)(q_j)=\epsilon_N(q_j) \cdot (l-a)u=-\epsilon_N(p_1) \cdot (l-a)u$ if $t+1 \leq j \leq k$.
\end{enumerate}

For the induced action on $F$, $\int_F:=\pi_*$ is a map from $H_{S^1}^i(F;\mathbb{Z})$ to $H^{i- \dim F}(\mathbb{CP}^\infty;\mathbb{Z})$ for all $i \in \mathbb{Z}$. Since $\dim F=4$ and $e_{S^1}(NF)$ has degree 2, the image under $\int_F$ of $e_{S^1}(NF)$ vanishes;
\begin{center}
$\displaystyle \int_F e_{S^1}(NF)=0$.
\end{center}
On the other hand, applying the ABBV localization formula (Theorem \ref{t21}) to the induced action on $F$ with taking $\alpha=e_{S^1}(NF)$,
\begin{center}
$\displaystyle \int_F e_{S^1}(NF)=\sum_{p \in F^{S^1}}\int_p \frac{e_{S^1}(NF)|_p}{e_{S^1}(T_pF)}$\footnote{Note that the bundle in the denominator is the normal bundle of $p$ in $F$, which is therefore the tangent space $T_pF$ of $p$ in $F$.}

$\displaystyle =\sum_{i=1}^k \bigg\{ \epsilon_F(p_i)\frac{e_{S^1}(NF)(p_i)}{e_{S^1}(T_{p_i}F)}\bigg\}+ \sum_{j=1}^k \bigg\{ \epsilon_F(q_j)\frac{e_{S^1}(NF)(q_j)}{e_{S^1}(T_{q_j}F)}\bigg\}$

$\displaystyle =\sum_{i=1}^s \bigg\{ \epsilon_F(p_i)\frac{e_{S^1}(NF)(p_i)}{e_{S^1}(T_{p_i}F)} \bigg\} + \sum_{i=s+1}^k \bigg\{ \epsilon_F(p_i)\frac{e_{S^1}(NF)(p_i)}{e_{S^1}(T_{p_i}F)} \bigg\}$

$\displaystyle + \sum_{j=1}^t \bigg\{ \epsilon_F(q_j)\frac{e_{S^1}(NF)(q_j)}{e_{S^1}(T_{q_j}F)}\bigg\} + \sum_{j=t+1}^k \bigg\{ \epsilon_F(q_j)\frac{e_{S^1}(NF)(q_j)}{e_{S^1}(T_{q_j}F)}\bigg\}$

$\displaystyle =\sum_{i=1}^s \bigg\{ +\frac{+\epsilon_N(p_1) \cdot au}{l^2u^2}\bigg\} +\sum_{i=s+1}^k \bigg\{ +\frac{-\epsilon_N(p_1) \cdot (l-a)u}{l^2u^2}\bigg\}$

$\displaystyle +\sum_{j=1}^t \bigg\{ -\frac{+\epsilon_N(p_1) \cdot au}{l^2u^2} \bigg\} + \sum_{j=t+1}^k \bigg\{ -\frac{-\epsilon_N(p_1) \cdot (l-a)u}{l^2u^2}\bigg\}$

$\displaystyle =\frac{\epsilon_N(p_1)}{l^2 u} \{sa - (k-s)(l-a) - ta +(k-t)(l-a)\}$

$\displaystyle =\frac{\epsilon_N(p_1)}{l u}(s-t).$
\end{center}
Therefore, $s=t$. Then $p_1,\cdots,p_s$ and $q_1,\cdots,q_s$ have weights $\{l,l,a\}$ and $\epsilon(p_1)=\cdots=\epsilon(p_s)=-\epsilon(q_1)=\cdots=-\epsilon(q_s)$. Also, $p_{s+1},\cdots,p_k$ and $q_{s+1},\cdots,q_k$ have weights $\{l,l,l-a\}$ and $\epsilon(p_{s+1})=\cdots=\epsilon(p_k)=-\epsilon(q_{s+1})=\cdots=-\epsilon(q_k)=-\epsilon(p_1)$. Thus, this lemma holds. \end{proof}

\section{Proof of main results} \label{s5}

In this section, we prove Theorem \ref{t11} and Theorem \ref{t12} together.

\begin{proof}[\textbf{Proof of Theorems \ref{t11} and \ref{t12}}]

We proceed by showing that fixed points of $M$ with the largest weight can be successively removed via connected sums at fixed points of $S^1$-actions on $S^6$, $\mathbb{CP}^3$, $Z_1$, and $Z_2$ (as well as these manifolds with opposite orientations).

By quotienting out by the subgroup that acts trivially, we may assume that the action is effective. Let $l$ be the largest weight, that is,
$$l=\max\{w_{p,i} \, | \, 1 \leq i \leq 3, p \in M^{S^1}\}.$$
If $l=1$ or $l=2$, then both Theorems \ref{t11} and \ref{t12} follow from Lemma \ref{l51}, where we use $S^6$ for the equivariant connected sum. In terms of the operations in Theorem \ref{t12}, this corresponds to Operation (1). 

Therefore, from now on, we assume that $l>2$. Suppose that a fixed point $p_1$ has weight $l$. Without loss of generality, by reversing the orientation of $M$ if necessary, we may assume that $\epsilon(p_1)=+1$. This assumption is made to simplify the proof, but a caution is in order: in the proof below, we will take the connected sum of $M$ with some $N \in \{S^6, \mathbb{CP}^3, \overline{\mathbb{CP}^3}, Z_1, \overline{Z_1}, Z_2, \overline{Z_2}, Z_2 \sharp \overline{Z_2}\}$. If instead $\epsilon(p_1)=-1$, then we must take the connected sum of $M$ and $\overline{N}$. 

Let $F$ be the component of $M^{\mathbb{Z}_l}$ that contains $p_1$.
Since the action is effective, there are two possibilities.
\begin{enumerate}[(1)]
\item The multiplicity of $l$ in $T_{p_1}M$ is 2. 
\item The multiplicity of $l$ in $T_{p_1}M$ is 1. 
\end{enumerate}

Assume Case (1) holds. Let $\{l,l,x\}$ be the weights at $p_1$, for some positive integer $x$. Then $x<l$. By Lemma \ref{l43}, there exists another fixed point $p_2$ such that $\epsilon(p_2)=-\epsilon(p_1)=-1$, the weights at $p_2$ are also $\{l,l,x\}$. Then we can perform an equivariant connected sum at $p_1$ and $p_2$ of $M$ and $q_2$ and $q_1$ of $S^6$ in Example \ref{e1} with $\{a,b,c\}=\{l,l,x\}$, to construct another 6-dimensional compact connected oriented $S^1$-manifold $M'$ that has fixed point data $$\Sigma_M \setminus (\{+,l,l,x\} \cup \{-,l,l,x\}).$$ This corresponds to Operation (1) of Theorem \ref{t12}, where $\{l,l,x\}$ corresponds to $\{A,B,C\}$ in Operation (1).

Assume Case (2) holds. Let $\{l,x,y\}$ be the weights at $p_1$ for some positive integers $x$ and $y$. Then $x,y<l$. By Lemma \ref{l42}, there exists another fixed point $p_2$ such that one of the following holds:
\begin{enumerate}[(a)]
\item $\epsilon(p_2)=-\epsilon(p_1)=-1$ and the weights at $p_2$ are $\{l,x,y\}$.
\item $\epsilon(p_2)=-\epsilon(p_1)=-1$ and the weights at $p_2$ are $\{l,l-x,l-y\}$.
\item $\epsilon(p_2)=\epsilon(p_1)=1$ and the weights at $p_2$ are $\{l,x,l-y\}$.
\item $\epsilon(p_2)=\epsilon(p_1)=1$ and the weights at $p_2$ are $\{l,l-x,y\}$.
\end{enumerate}
Up to permuting $x$ and $y$, Case (c) and Case (d) are equivalent; thus, we only need to consider Cases (a-c).

Assume that Case (2-a) holds. The fixed points $p_1$ and $p_2$ have fixed point data $\{+,l,x,y\}$ and $\{-,l,x,y\}$, respectively. As in Case (1), we can take an equivariant connected sum at $p_1$ and $p_2$ of $M$ and $q_2$ and $q_1$ of $S^6$ in Example \ref{e1} with $\{a,b,c\}=\{l,x,y\}$, to construct another 6-dimensional compact connected oriented $S^1$-manifold $M'$ whose fixed point data is 
$$\Sigma_M \setminus (\{+,l,x,y\} \cup \{-,l,x,y\}).$$
This corresponds to Operation (1) of Theorem \ref{t12}.

Assume that Case (2-b) holds. We have two subcases.
\begin{enumerate}[(i)]
\item $x \neq y$.
\item $x=y$.
\end{enumerate}

Suppose that Case (2-b-i) holds. Permuting $x$ and $y$ if necessary, we may assume that $x>y$. The fixed points $p_1$ and $p_2$ have fixed point data $\{+,l,x,y\}$ and $\{-,l,l-x,l-y\}$, respectively. In Example \ref{e2} of the action on $\mathbb{CP}^3$, take $c=l$, $b=x$, and $a=y$, and reverse its orientation; its fixed point data is
\begin{center}
$\Sigma_{q_1}=\{-,y,x,l\}$, $\Sigma_{q_2}=\{+,y,x-y,l-y\}$, $\Sigma_{q_3}=\{-,x,x-y,l-x\}$, $\Sigma_{q_4}=\{+,l,l-y,l-x\}$.
\end{center}
Then we can take an equivariant connected sum at $p_1$ and $p_2$ of $M$ and $q_1$ and $q_4$ of $\overline{\mathbb{CP}^3}$ (with this action), to construct another oriented $S^1$-manifold $M'$ whose fixed point data is
\begin{center}
$\{\Sigma_M \setminus (\{+,l,x,y\} \cup \{-,l,l-y,l-x\})\} \cup (\{+,y,x-y,l-y\} \cup \{-,x,x-y,l-x\})$. 
\end{center}
This corresponds to Operation (2) of Theorem \ref{t12} with $l=C$, $x=B$, and $y=A$.

Suppose that Case (2-b-ii) holds. The fixed points $p_1$ and $p_2$ have fixed point data $\{+,l,x,x\}$ and $\{-,l,l-x,l-x\}$, respectively. First, suppose that $2x<l$. In Example \ref{e8} of the action on $Z:=Z_2(a,e,e) \sharp \overline{Z_2}(a,a-e,a-e)$ we take $a=l$ and $e=x$ and reverse its orientation; its fixed point data is
\begin{center}
$\{+,l-x,l-2x,x\}$, $\{+,l-x, x, x\}$, $\{+,l-x, x, x\}$, $\{-, l, x, x\}$, $\{-,l-2x, x, x\}$, $\{+, x, l-2x, l-x\}$, $\{-,x,l-x,l-x\}$, $\{-,x,l-x,l-x\}$, $\{+, l, l-x, l-x\}$, $\{-, l-2x, l-x, l-x\}$.
\end{center}
We can take a connected sum at $p_1$ and $p_2$ of $M$ and $\hat{q}_4$ and $\hat{q}_9$ of $\overline{Z}$, to construct another $S^1$-manifold $M'$ whose fixed point data is
\begin{center}
$\Sigma_{M'}=\{\Sigma_M \setminus (\{+,l,x,x\} \cup \{-,l,l-x,l-x\})\} \cup (\{+,l-x,l-2x,x\} \cup \{+,l-x, x, x\} \cup \{+,l-x, x, x\} \cup \{-,l-2x, x, x\} \cup \{+, x, l-2x, l-x\} \cup \{-,x,l-x,l-x\} \cup \{-,x,l-x,l-x\} \cup \{-, l-2x, l-x, l-x\})$. 
\end{center}
This corresponds to Operation (5) of Theorem \ref{t12} with $l=C$ and $x=A$.

Second, assume Case (2-b-ii) and $2x>l$. In Example \ref{e8} of the action on $Z_2(a,e,e) \sharp \overline{Z_2}(a,a-e,a-e)$ we take $a=l$ and $e=l-x$; its fixed point data is
\begin{center}
$\{-,x,2x-l,l-x\}$, $\{-,x, l-x, l-x\}$, $\{-, x, l-x, l-x\}$, $\{+, l, l-x, l-x\}$, $\{+, 2x-l, l-x, l-x\}$, $\{-, l-x, 2x-l, x\}$, $\{+, l-x, x, x\}$, $\{+, l-x, x, x\}$, $\{-, l, x, x\}$, $\{+, 2x-l, x, x\}$.
\end{center}
We can take a connected sum at $p_1$ and $p_2$ of $M$ and $\hat{q}_9$ and $\hat{q}_4$ of $Z_2(l, l-x, l-x) \sharp \overline{Z_2}(l, x, x)$, to construct another $S^1$-manifold $M'$ with fixed point data 
\begin{center}
$\Sigma_{M'}=\{\Sigma_M \setminus (\{+,l,x,x\} \cup \{-,l,l-x,l-x\})\} \cup (\{-,x,2x-l,l-x\} \cup \{-,x, l-x, l-x\} \cup \{-, x, l-x, l-x\} \cup \{+, 2x-l, l-x, l-x\} \cup \{-, l-x, 2x-l, x\} \cup \{+, l-x, x, x\} \cup \{+, l-x, x, x\} \cup \{+, 2x-l, x, x\})$. 
\end{center}
This also corresponds to Operation (5) of Theorem \ref{t12} with $l=C$ and $x=C-A$.

Third, assume Case (2-b-ii) and $2x=l$. Since $p_1$ has weights $\{2x(=l),x,x\}$ and the action is effective, this implies that $x=1$. Then $p_1$ and $p_2$ have fixed point data $\{+,2,1,1\}$ and $\{-,2,1,1\}$ and this case is Case (2-a); thus proceed as in Case (2-a).

Assume that Case (2-c) holds. We have two subcases.
\begin{enumerate}[(i)]
\item $x \neq y$.
\item $x=y$.
\end{enumerate}

Suppose that Case (2-c-i) holds. The fixed points $p_1$ and $p_2$ have fixed point data $\{+,l,x,y\}$ and $\{+,l,x,l-y\}$, respectively. First, suppose that $x<y$. In Example \ref{e4} of the action on $Z_1(a,b,c)$ we take $a=l$, $b=y$, and $c=x$, and reverse its orientation; its fixed point data is
\begin{center}
$\{-, l-y, l, x\}$, $\{+, l-y, l-x, x\}$, $\{+, l-y, y, x\}$, $\{-, l-y, y-x, x\}$, $\{-, l, y, x\}$, $\{+, l-x, y-x, x\}$.
\end{center}
We can take a connected sum at $p_1$ and $p_2$ of $M$ and $q_5$ and $q_1$ of $\overline{Z_1(l,y,x)}$ to construct another $S^1$-manifold $M'$ with fixed point data
\begin{center}
$\Sigma_{M'}=\{\Sigma_M \setminus (\{+,l,x,y\} \cup \{+,l,x,l-y\})\} \cup (\{+, l-y, l-x, x\} \cup \{+, l-y, y, x\} \cup \{-, l-y, y-x, x\} \cup \{+, l-x, y-x, x\})$.
\end{center}
This corresponds to Operation (3) of Theorem \ref{t12} with $x=A$, $y=B$, and $l=C$.

Next, suppose Case (2-c-i) with $x>y$. In Example \ref{e5} of the action on $Z_1(a,b,c)$ we take $a=l$, $b=y$, and $c=x$ and reverse its orientation; its fixed point data is
\begin{center}
$\{-, l-y, l, x\}$, $\{+, l-y, l-x, x\}$, $\{+, l-y, y, x\}$, $\{+, l-y, x-y, x\}$, $\{-, l, y, x\}$, $\{-, l-x, x-y, x\}$.
\end{center}
We can take a connected sum at $p_1$ and $p_2$ of $M$ and $q_5$ and $q_1$ of $\overline{Z_1(l,y,x)}$ to construct another $S^1$-manifold $M'$ with fixed point data
\begin{center}
$\Sigma_{M'}=\{\Sigma_M \setminus (\{+,l,x,y\} \cup \{+,l,x,l-y\})\} \cup (\{+, l-y, l-x, x\} \cup \{+, l-y, y, x\} \cup \{+, l-y, x-y, x\} \cup \{-, l-x, x-y, x\})$.
\end{center}
This corresponds to Operation (3') of Theorem \ref{t12} with $x=A$, $y=B$, and $l=C$.

Suppose that Case (2-c-ii) holds. The fixed points $p_1$ and $p_2$ have fixed point data $\{+,l,x,x\}$ and $\{+,l,x,l-x\}$, respectively. First, suppose that $2x<l$. In Example \ref{e6} we take $a=l$ and $d=x$ and reverse its orientation; its fixed point data is
\begin{center}
$\{-, l-x, l, x\}$, $\{+,l-x,l-2x,x\}$, $\{+,l-x, x, x\}$, $\{+,l-x, x, x\}$, $\{-, l, x, x\}$, $\{-,l-2x, x, x\}$.
\end{center}
We can take a connected sum at $p_1$ and $p_2$ of $M$ and $q_5$ and $q_1$ of $\overline{Z_2(l,x,x)}$ to construct another $S^1$-manifold $M'$ with fixed point data
\begin{center}
$\Sigma_{M'}=\{\Sigma_M \setminus (\{+,l,x,x\} \cup \{+,l,x,l-x\})\} \cup (\{+,l-x,l-2x,x\} \cup \{+,l-x, x, x\} \cup \{+,l-x, x, x\} \cup \{-,l-2x, x, x\} )$.
\end{center}
This corresponds to Operation (4) of Theorem \ref{t12} with $x=A$ and $l=C$. 

Second, suppose Case (2-c-ii) with $2x>l$. In Example \ref{e7} we take $a=l$ and $d=x$ and reverse its orientation; its fixed point data is
\begin{center}
$\{-, l-x, l, x\}$, $\{-,l-x,2x-l,x\}$, $\{+,l-x, x, x\}$, $\{+,l-x, x, x\}$, $\{-, l, x, x\}$, $\{+,2x-l, x, x\}$.
\end{center}
We can take a connected sum at $p_1$ and $p_2$ of $M$ and $q_5$ and $q_1$ of $\overline{Z_2(l,x,x)}$ to construct another $S^1$-manifold $M'$ with fixed point data
\begin{center}
$\Sigma_{M'}=\{\Sigma_M \setminus (\{+,l,x,x\} \cup \{+,l,x,l-x\})\} \cup (\{-,l-x,2x-l,x\} \cup \{+,l-x, x, x\} \cup \{+,l-x, x, x\} \cup \{+,2x-l, x, x\} )$.
\end{center}
This corresponds to Operation (4') of Theorem \ref{t12} with $x=A$ and $l=C$.

Third, suppose Case (2-c-ii) with $2x=l$. Since $p_1$ has weights $\{2x,x,x\}$ and the action is effective, this implies that $x=1$, that is, the largest weight $l$ is 2. By Lemma \ref{l51} below, we can take an equivariant connected sum at fixed points of $M$ and fixed points of rotations of $S^6$'s to construct a fixed-point-free $S^1$-action on a compact connected oriented manifold $M'$, which corresponds to taking Operation (1) of Theorem \ref{t12} repeatedly.

To sum up, by repeatedly applying the above steps, we can successively take equivariant connected sums at two fixed points of $M$ and at two fixed points of $S^1$-actions on $S^6$, $\mathbb{CP}^3$, $Z_1$, $Z_2$, and $Z_2 \sharp \overline{Z_2}$ (and these with opposite orientations) to construct another 6-dimensional compact connected oriented $S^1$-manifold $M''$ with a discrete fixed point set, in which the largest weight is strictly smaller than $l$. This is because every weight added at each step is smaller than $l$. Now, on $M''$, let $$l''=\max\{w_{p,i} \, | \, 1 \leq i \leq 3, p \in (M'')^{S^1}\}<l$$ and repeat the above argument.

In the end, by successively taking equivariant connected sums with the manifolds described above, we obtain a 6-dimensional compact connected oriented $S^1$-manifold $\widehat{M}$ with a discrete fixed point set, in which every weight at any fixed point is 1 or 2, provided the fixed point set is non-empty. As in the beginning of this proof, by Lemma \ref{l51} we can take an equivariant connected sum at fixed points of $\widehat{M}$ and at fixed points of rotations of $S^6$'s to construct a fixed-point-free $S^1$-action on another 6-dimensional compact connected oriented manifold. This last step corresponds to Operation (1) of Theorem \ref{t12}. \end{proof}

In the proof of Theorems \ref{t11} and \ref{t12} above, we used the following lemma.

\begin{lemma} \label{l51}
Let the circle act on a 6-dimensional compact connected oriented manifold $M$ with a discrete fixed point set. Suppose that the largest weight at most 2, that is, $$\max\{w_{p,i} \, | \, 1 \leq i \leq 3, p \in M^{S^1}\} \leq 2.$$ Then the number of fixed points with fixed point data $\{+,1,1,1\}$ (respectively, $\{+,1,1,2\}$ and $\{+,1,2,2\}$) is equal to the number of fixed points with fixed point data $\{-,1,1,1\}$ (respectively, $\{-,1,1,2\}$ and $\{-,1,2,2\}$). Consequently, we can take equivariant connected sums at fixed points of $M$ and at fixed points of rotations of $S^{6}$'s to construct a fixed-point-free $S^1$-action on a 6-dimensional compact connected oriented manifold.
\end{lemma}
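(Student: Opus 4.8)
The plan is to exploit that, with largest weight $2$, every weight lies in $\{1,2\}$, so the fixed point data at any fixed point is one of $\{1,1,1\}$, $\{1,1,2\}$, $\{1,2,2\}$, or $\{2,2,2\}$, and then to count each surviving type by sign. First I would dispose of the possibility of a fixed point with weights $\{2,2,2\}$: near such a point the nontrivial element of $\mathbb{Z}_2\subset S^1$ acts trivially on the slice, so it lies in a $6$-dimensional component of $M^{\mathbb{Z}_2}$, which, being open and closed in the connected manifold $M$, is all of $M$; this forces $\mathbb{Z}_2$ to act trivially, contradicting effectiveness. (If $M^{S^1}=\emptyset$ the action is already fixed point free and there is nothing to prove, so I assume $M^{S^1}\neq\emptyset$; then the smallest weight is $1$.) Writing $a^{\pm}$, $b^{\pm}$, $c^{\pm}$ for the numbers of fixed points with fixed point data $\{\pm,1,1,1\}$, $\{\pm,1,1,2\}$, $\{\pm,1,2,2\}$ respectively, the goal is exactly $a^+=a^-$, $b^+=b^-$, and $c^+=c^-$.

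Next I would produce three linear relations among these six quantities. Applying Lemma \ref{l43}(2) with $l=2$ and $a=1$, so that $\{l,l,a\}=\{1,2,2\}$, gives immediately $c^+=c^-$. Since $\dim M=6\equiv 2 \bmod 4$, Proposition \ref{p23} gives $a^++b^++c^+=a^-+b^-+c^-$. Finally, since the smallest weight is $1$ and $N_p(1)$ equals $3$, $2$, $1$ at points of type $\{1,1,1\}$, $\{1,1,2\}$, $\{1,2,2\}$, Lemma \ref{l24} gives $3a^++2b^++c^+=3a^-+2b^-+c^-$. Substituting $c^+=c^-$ into the last two relations and setting $X=a^+-a^-$, $Y=b^+-b^-$ yields the system $X+Y=0$ and $3X+2Y=0$, whose coefficient matrix $\left(\begin{smallmatrix}1&1\\3&2\end{smallmatrix}\right)$ is invertible; hence $X=Y=0$, that is, $a^+=a^-$ and $b^+=b^-$.

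With the three sign-balance equalities in hand, I would finish exactly as in the proof of Lemma \ref{l213}: for each weight multiset $w\in\{\{1,1,1\},\{1,1,2\},\{1,2,2\}\}$ the $+$ and $-$ fixed points of type $w$ occur in equal numbers, and the rotation of $S^6$ in Example \ref{e1} with $\{a,b,c\}=w$ has fixed point data $\{+,w\},\{-,w\}$. Pairing each $\{+,w\}$ point of $M$ with a $\{-,w\}$ point of $M$ and taking an equivariant connected sum with one such copy of $S^6$ (Lemma \ref{l212}, i.e.\ Operation (1)) removes that pair, and iterating over all pairs cancels every fixed point, producing a fixed point free $S^1$-action on a $6$-dimensional compact connected oriented manifold. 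The only substantive input is Lemma \ref{l43}, which supplies the $\{1,2,2\}$ balance through the localization computation; given that, the remaining two equalities are forced by the two elementary index-theoretic identities and a $2\times 2$ linear solve, so the main obstacle is really already absorbed into the earlier lemmas rather than into this argument itself.
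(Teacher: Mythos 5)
Your proposal is correct and follows essentially the same route as the paper's own proof: the paper likewise combines Lemma \ref{l43} (giving the $\{1,2,2\}$ balance), Proposition \ref{p23}, and Lemma \ref{l24} with $a=1$ to get the same linear system, solves it to equate the remaining counts, and then cancels paired fixed points by equivariant connected sums with rotations of $S^6$. Your explicit exclusion of $\{2,2,2\}$ via a $6$-dimensional component of $M^{\mathbb{Z}_2}$ just spells out what the paper attributes to effectiveness, so it is a welcome but inessential addition.
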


\begin{proof}
Since the largest weight is at most 2, the possible multisets of weights at each fixed point are $\{1,1,1\}$, $\{1,1,2\}$, and $\{1,2,2\}$. Let $k_1,\cdots,k_6$ denote the numbers of fixed points with fixed point data $\{+,1,1,1\}$, $\{+,1,1,2\}$, $\{+,1,2,2\}$, $\{-,1,1,1\}$, $\{-,1,1,2\}$, and $\{-,1,2,2\}$, respectively. By Proposition \ref{p23}, the number $k_1+k_2+k_3$ of fixed points with sign $+1$ is equal to the number $k_4+k_5+k_6$ of fixed points with sign $-1$, that is, (1) $k_1+k_2+k_3=k_4+k_5+k_6$. Applying Lemma \ref{l24} to $M$ with $a=1$, it follows that (2) $3k_1+2k_2+k_3=3k_4+2k_5+k_6$.

Taking $\alpha=1$ in Theorem \ref{t21},
\begin{center}
$\displaystyle 1=\sum_{p \in M^{S^1}} \frac{1}{\prod_{i=1}^3 w_{p,i}}=k_1 + k_2 \frac{1}{2} + k_3 \frac{1}{4} - k_4 - k_5 \frac{1}{2} - k_6 \frac{1}{4}$.
\end{center}
On the other hand, since the equivariant cohomology class 1 has degree 0, by a dimensional reason that $\int_M$ is a map from $H_{S^1}^i(M;\mathbb{Z})$ to $H^{i- \dim M} (\mathbb{CP}^\infty;\mathbb{Z})$, the image of 1 under $\int_M$ vanishes, that is, $\int_M 1=0$.
Thus, $0=k_1 + k_2 \frac{1}{2} + k_3 \frac{1}{4} - k_4 - k_5 \frac{1}{2} - k_6 \frac{1}{4}$, that is, (3) $4k_1+2k_2+k_3=4k_4+2k_5+k_6$. Then (1-3) imply that $k_1=k_4$, $k_2=k_5$, and $k_3=k_6$.

Therefore, for each pair of fixed points $(p_i,q_i)$ that have the same multiset of weights $\{a,b,c\}$ and $\epsilon(p_i)=-\epsilon(q_i)=1$, we can take an equivariant connected sum at $p_i$ and $q_i$ of $M$ and at fixed points of a rotation of $S^6$ in Example \ref{e1}, to construct a fixed-point-free $S^1$-action on a 6-dimensional compact connected oriented manifold.
\end{proof}

\end{document}